\documentclass[12pt]{amsart}



\usepackage{amsmath, amsthm, amssymb, tikz}
\usepackage[letterpaper,left=2.25cm,right=2.25cm,top=3cm,bottom=3cm,headsep=1cm]{geometry}
\usetikzlibrary{topaths, calc, shapes,arrows,fit,positioning}
\usepackage{ytableau}
\usepackage{graphicx}
\usepackage{xparse}
\usepackage{mathtools}
\usepackage{hyperref}
\usepackage{subcaption}

\geometry{margin=1in}

\tikzset{Text/.style={black}}
\tikzset{Vert/.style={circle,fill=orange!50, scale=0.7, draw=black}}

\hypersetup{colorlinks, linkcolor={red!75!black},
citecolor={blue!50!black}, urlcolor={blue!80!black}}


\newcommand{\Z}{\mathbb{Z}}
\newcommand{\M}{\mathcal{M}}
\newcommand{\G}{\mathcal{G}}
\newcommand{\inv}{^{-1}}
\newcommand{\lcm}{\textrm{lcm}}
\newcommand{\init}{\textrm{init}}
\newcommand{\reg}{\textrm{reg}}

    \newtheorem{theorem}{Theorem}[section]
    \newtheorem{lemma}[theorem]{Lemma}
    \newtheorem{prop}[theorem]{Proposition}
    
    \newtheorem*{thm*}{Theorem}
\theoremstyle{definition}
    \newtheorem{definition}[theorem]{Definition}
    \newtheorem{example}[theorem]{Example}
\theoremstyle{remark}
    \newtheorem{remark}[theorem]{Remark}
\numberwithin{equation}{section}

\begin{document}

\title{Matrix Schubert varieties, binomial ideals, \\ and reduced Gr\"obner bases}
\date{June 2, 2023}
\author{Ada Stelzer}
\address{Department of Mathematics, University of Illinois at Urbana-Champaign, 1409 W.~Green Street, Urbana, IL 61801}
\email{astelzer@illinois.edu}
\maketitle

\pagestyle{plain}
\begin{abstract}
    We prove a sharp lower bound on the number of terms in an element of the reduced Gr\"obner basis of a Schubert determinantal ideal $I_w$ under the term order of [Knutson--Miller `05]. We give three applications. 
    First, we give a pattern-avoidance characterization of the matrix Schubert varieties whose defining ideals are binomial. This complements a result of [Escobar--M\'esz\'aros '16] on matrix Schubert varieties that are toric with respect to their natural torus action. Second, we give a combinatorial proof that the recent formulas of [Rajchgot--Robichaux--Weigandt `23] and [Almousa--Dochtermann--Smith `22] computing the Castelnuovo-Mumford regularity of vexillary $I_w$ and toric edge ideals of bipartite graphs respectively agree for binomial $I_w$.
    Third, we demonstrate that the Gr\"obner basis for $I_w$ given by minimal generators [Gao--Yong `22] is reduced if and only if the defining permutation $w$ is vexillary. 
\end{abstract}

\section{Introduction and main results}\label{section1}
Let $\Bbbk$ be a field and let $\M_n$ be the affine space of $n\times n$ matrices with entries in $\Bbbk$. For a permutation $w$ in the symmetric group $S_n$, the  \textit{permutation matrix} $M_w$ is the element of $\M_n$ with $1$s in the positions $(i, w(i))$ for $1\leq i\leq n$ and $0$s elsewhere. Let $B_+$ and $B_-$ denote the groups of invertible upper- and lower-triangular $n\times n$ matrices over~$\Bbbk$. The product $B_-\times B_+$ acts on $\M_n$ by left- and right-multiplication, \textit{i.e.}, $(x, y)\cdot A = x A y\inv$. Fulton \cite{Fulton} introduced the following object in his study of degeneracy loci of flagged vector bundles.

\begin{definition}
    The \textit{matrix Schubert variety} $X_w$ is the Zariski closure of the $B_-\times B_+$-orbit of the permutation matrix $M_w$ in $\M_n\simeq \Bbbk^{n^2}$.
\end{definition}

\begin{definition}
    The \textit{Schubert determinantal ideal} $I_w$ is the ideal of $R = \Bbbk[x_{ij}]_{1\leq i, j\leq n}$ corresponding to $X_w$.
\end{definition}

For a permutation $w$, let $r_w(i, j) = r(i, j) = r_{ij}$ be the \textit{rank function} counting the number of 1s weakly northwest of position $(i, j)$ in the permutation matrix $M_w$. Let $M^{[a, b]}$ denote the northwest $a\times b$ submatrix of the generic matrix of variables $[x_{ij}]_{1\leq i, j\leq n}$. In \cite{Fulton}, Fulton described a generating set for $I_w$:
\begin{equation*}
    I_w = \langle (r_{ij}+1)\times(r_{ij}+1)\textrm{ minors of } M^{[i, j]}| 1\leq i, j\leq n\rangle.
\end{equation*}

\begin{example}\label{ex:intro}
    Consider $I_w$ for $w = 31425$. The rank function is:
    
    \begin{center}
        $r_w=$ \begin{tabular}{|c|c|c|c|c|}
        \hline
        0 & 0 & 1 & 1 & 1\\
        \hline
        1 & 1 & 2 & 2 & 2\\
        \hline
        1 & 1 & 2 & 3 & 3\\
        \hline
        1 & 2 & 3 & 4 & 4\\
        \hline
        1 & 2 & 3 & 4 & 5\\
        \hline
        \end{tabular}
    \end{center}
   
    $I_w$ is generated by the two $1\times 1$ minors of $M^{[1, 2]}$ and the three $2\times 2$ minors of $M^{[3, 2]}$:
    $$I_w = \bigg\langle x_{11}, x_{12},
        \begin{vmatrix}
            x_{11} & x_{12}\\
            x_{21} & x_{22}\\
        \end{vmatrix},
        \begin{vmatrix}
            x_{11} & x_{12}\\
            x_{31} & x_{32}\\
        \end{vmatrix},
        \begin{vmatrix}
            x_{21} & x_{22}\\
            x_{31} & x_{32}\\
        \end{vmatrix}\bigg\rangle.$$
\end{example}

In \cite[Theorem B]{KnutsonMiller}, Knutson--Miller showed that Fulton's generators form a \textit{Gr\"obner basis} for $I_w$ under ``antidiagonal" term orderings. More recently, Gao--Yong refined Fulton's generators to a minimal generating set that is still a Gr\"obner basis in \cite[Corollary 1.8]{GaoYong}. Now, the \textit{reduced} Gr\"obner basis of an ideal is unique (for a given term order) and contains fewer terms than any other Gr\"obner basis. Our first theorem establishes a sharp lower bound for the number of terms in the reduced Gr\"obner basis of $I_w$ under any antidiagonal term order.

\begin{theorem}\label{thm:main}
        The reduced Gr\"obner basis $\G'_w$ of $I_w$ under any antidiagonal term order has one generator for each element of the Gao--Yong Gr\"obner basis $\G_w$. Each generator of degree $d$ in $\G'_w$ has at least $2^{d-1}$ terms.
\end{theorem}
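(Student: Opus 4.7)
The plan is to prove the two claims of the theorem separately.

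For the bijection, I would exploit the minimality of the Gao--Yong Gr\"obner basis $\G_w$. Since $\G_w$ is a minimal Gr\"obner basis of $I_w$ under the antidiagonal term order, the set $\{\init(g) : g \in \G_w\}$ is the unique minimal monomial generating set of $\init(I_w)$. The reduced Gr\"obner basis $\G'_w$ is also minimal by its defining property, so its initial terms form the same minimal generating set. Matching elements with equal leading terms yields the desired bijection between $\G_w$ and $\G'_w$, in particular giving $|\G'_w| = |\G_w|$.

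For the term lower bound, fix $g \in \G_w$ of degree $d$ and let $g' \in \G'_w$ be the corresponding reduced element, so $\init(g') = \init(g)$. The Gao--Yong generator $g$ is, up to sign, $\det M^{[R,C]}$ for some subsets of rows $R = \{r_1 < \cdots < r_d\}$ and columns $C = \{c_1 < \cdots < c_d\}$, whose leading term is the antidiagonal $\prod_l x_{r_l, c_{d+1-l}}$. My plan is to exhibit $2^{d-1}$ distinct monomials $m_\epsilon$, indexed by binary words $\epsilon \in \{0,1\}^{d-1}$, each appearing as a nonzero term of $g'$. Each $m_\epsilon$ would arise as the Leibniz term of $\det M^{[R,C]}$ corresponding to a permutation $\sigma_\epsilon \in S_d$, obtained from the reverse permutation by a sequence of ``local swaps'' controlled by $\epsilon$. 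The key claim is that each $m_\epsilon \notin \init(I_w)$, so that $m_\epsilon$ is not divisible by $\init(h)$ for any $h \in \G_w \setminus \{g\}$. This uses the minimality of $\init(g)$: any divisibility $\init(h) \mid m_\epsilon$ could be combined with the variables of $\init(g)$ that are unchanged from $m_\epsilon$ (i.e., outside the swap positions) to produce a strict divisor of $\init(g)$ in $\init(I_w)$, contradicting the minimality of $g$ as a Gao--Yong generator. Since reductions during $g \to g'$ only introduce terms strictly smaller than the term being reduced, the non-reducible contributions $\operatorname{sgn}(\sigma_\epsilon) m_\epsilon$ from the Leibniz expansion of $g$ persist in $g'$.

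The main obstacle is to design the map $\epsilon \mapsto \sigma_\epsilon$ so that the $2^{d-1}$ monomials $m_\epsilon$ are pairwise distinct and each is non-reducible as described, and moreover that the reduction process does not introduce a cancelling contribution at $m_\epsilon$. I expect the construction to proceed by induction on $d$, using the rank function $r_w$ and the essential set of $w$ to identify allowable local swaps. The most delicate part is likely verifying that overlapping or interacting shifts (when consecutive $\epsilon_i$ are both equal to $1$) do not create coincidences among the $m_\epsilon$ or accidentally introduce monomials reducible by other Gao--Yong generators.
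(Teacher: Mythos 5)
Your first paragraph (the bijection between $\G_w$ and $\G'_w$ via the uniqueness of the minimal monomial generating set of $\init(I_w)$) is correct and is essentially how the paper handles that half of the statement. The overall shape of your second half --- exhibit $2^{d-1}$ terms of the degree-$d$ minor that lie outside $\init(I_w)$ and argue they persist in the reduced generator --- also matches the paper's strategy. But the step you defer as ``the main obstacle'' is in fact the entire content of the theorem, and the mechanism you propose for it cannot work. Minimality of $\G_w$ only says that $LT(h)$ does not divide $LT(g)$ for $h\neq g$; it places no constraint whatsoever on $LT(h)$ dividing \emph{other} terms of $g$. Indeed, Theorem~\ref{thm:vexillary} shows that such divisibility occurs for every non-vexillary $w$, so any argument deriving a contradiction from ``$\init(h)\mid m_\epsilon$'' alone must fail. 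Concretely, when $\init(h)$ involves some of the swapped variables of $m_\epsilon$, the monomial you form by adjoining the unchanged antidiagonal variables of $\init(g)$ is a multiple of $\init(h)$ (hence in $\init(I_w)$) but is \emph{not} a divisor of $\init(g)$, so no contradiction with minimality is obtained. That is precisely the interesting case.

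What is actually needed is a positional statement about \emph{which} terms of $m_{I,J}$ can be divisible by $LT(m_{I',J'})$ for another elusive minor. The paper proves (Lemma~\ref{lemma:reduction}, resting on Lemmas~\ref{lemma:elusive1} and~\ref{lemma:elusive2}) that if $LT(m_{I',J'})$ divides a term of $m_{I,J}$, then necessarily $I'\subseteq I$, $J'\subseteq J$, and the rank inequalities $r(i_a,j_d)\geq a$, $r(i_d,j_b)\geq b$ together with submodularity of $r_w$ force the antidiagonal of $m_{I',J'}$ to lie weakly northwest of the $(d-2)$th antidiagonal of $m_{I,J}$. Hence every Leibniz term supported strictly southeast of that antidiagonal is irreducible, and counting such terms (permutations $v$ with $v(i)\geq d-i$) gives exactly $2^{d-1}$; this is Proposition~\ref{prop:extreme}. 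Your proposal contains no substitute for this rank-function argument, and the ``induction on $d$ using the essential set'' you gesture at would essentially have to reprove it. (Your remaining worry --- that the division process might reintroduce a cancelling copy of some $m_\epsilon$ --- is legitimate but secondary; it is resolved by noting that the reduced generator with lead term $LT(g)$ is $LT(g)$ minus the normal form of $LT(g)$, whose terms outside $\init(I_w)$ coming from the irreducible part of $g$ are unaffected by reduction.)
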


By minimality, any Gr\"obner basis using antidiagonal term order must have as many generators as the Gao--Yong Gr\"obner basis, and Theorem~\ref{thm:main} says the number of terms in each generator must be exponential in its degree. This suggests that any description of $\G'_w$ (under antidiagonal term order) should be complicated.

We present three applications of Theorem~\ref{thm:main}.

The first two applications are stated in terms of permutation pattern avoidance. A permutation $w\in S_n$ \textit{contains} a pattern $v\in S_m$ if there exist indices $i_1<i_2<\dots<i_m$ such that $w(i_1),\dots,w(i_m)$ appear in the same relative order as $v(1),\dots,v(m)$. If $w$ does not contain the pattern $v$, then we say it \textit{avoids} $v$. Permutations avoiding $2143$ are called \textit{vexillary}. 

\begin{theorem}\label{thm:vexillary}
    The Gao--Yong Gr\"obner basis $\G_w$ for $I_w$ is reduced if and only if $w$ is vexillary.
\end{theorem}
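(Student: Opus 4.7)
Since $\G_w$ is already a Gr\"obner basis of $I_w$ by Gao--Yong, it is reduced if and only if every generator is monic and no term of any generator is divisible by the leading monomial of a distinct generator. Each element of $\G_w$ is (up to sign) a minor of the generic matrix, so monicity is automatic; the question therefore reduces to a combinatorial check comparing antidiagonal leading monomials of generators with the terms of other generators. Theorem~\ref{thm:main} tells us that $|\G_w|=|\G'_w|$, so $\G_w$ is reduced precisely when $\G_w=\G'_w$.

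For the ``only if'' direction I argue the contrapositive. If $w$ contains a $2143$ pattern, this forces two essential boxes of $w$, one at a ``northwest'' location $(a_1,b_1)$ and one ``southeast'' at $(a_2,b_2)$ with $a_1<a_2$ and $b_1<b_2$, contributing Gao--Yong generators $f_1$ of degree $d_1$ and $f_2$ of degree $d_2>d_1$. The goal is to exhibit a term of $f_2$ whose support contains the antidiagonal monomial of $f_1$. The archetypal case is $w=2143$ itself: the essential boxes $(1,1)$ and $(3,3)$ produce $\G_w=\{x_{11},\det M^{[3,3]}\}$, and $\det M^{[3,3]}$ contains the diagonal term $x_{11}x_{22}x_{33}$, which is divisible by the leading monomial $x_{11}$ of the other generator, so $\G_w$ is not reduced. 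The general case requires locating such a pair of essential boxes inside any $2143$ pattern embedded in $w$ and promoting the ``diagonal-contains-$x_{a_1b_1}$'' phenomenon to the larger Gao--Yong minor at $(a_2,b_2)$.

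For the ``if'' direction I will use the staircase characterization of vexillary permutations: the essential set of a vexillary $w$ is totally ordered in such a way that both row and column indices strictly increase when passing from one essential box to the next. Combined with the explicit combinatorial description of the Gao--Yong basis, I would verify that for any two distinct generators $f_1,f_2\in \G_w$ the antidiagonal variables of one cannot jointly appear in the support of any term of the other. The main obstacle will be this final combinatorial verification: one must carefully track the rows and columns used by the Gao--Yong generators at each essential box and rule out antidiagonal divisibility across generators of different degrees, a check for which the staircase condition seems both necessary (by the previous paragraph) and sufficient. Once this is established, $\G_w$ meets the defining conditions of a reduced Gr\"obner basis and, by uniqueness of the reduced Gr\"obner basis, equals $\G'_w$.
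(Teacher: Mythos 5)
Your high-level strategy is the same as the paper's---reduce the question to whether the antidiagonal lead term of one elusive minor can divide a term of another, and use the staircase characterization of vexillary essential sets (each essential box weakly southwest of the next, equivalently no two boxes in strictly northwest/southeast position)---but both directions of your argument stop exactly where the real work begins, so as written this is an outline rather than a proof. For the ``if'' direction, the verification you defer is actually short and you should carry it out: if $LT(g')$ divides a term of $g$, then every antidiagonal variable of $g'$ lies among the variables of that term of $g$, which forces the essential box to which $g'$ belongs to sit strictly northwest of the essential box of $g$; two essential boxes in strictly northwest/southeast position contradict the staircase characterization. That is the entire argument in the paper, and ``ruling out antidiagonal divisibility across generators'' does not require any further tracking of rows and columns.

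The ``only if'' direction contains the genuine gap. From a $2143$ pattern you do get (via Fulton's characterization) two essential boxes $e$ and $e'$ with $e'$ strictly southeast of $e$, but the step you describe as ``promoting the diagonal-contains-$x_{a_1b_1}$ phenomenon to the larger Gao--Yong minor at $(a_2,b_2)$'' is precisely what can fail: you need a minor belonging to $e'$ that (i) contains an elusive minor at $e$ as a subminor, so that one of its terms is divisible by the smaller minor's antidiagonal lead term, and (ii) is itself \emph{elusive}, i.e., actually a member of $\G_w$. An arbitrary minor at $e'$ satisfying (i) need not satisfy (ii). The paper resolves this by choosing $e'$, among all essential boxes strictly southeast of $e$, to minimize the rank difference $k = r(e') - r(e)$; taking $m_{I,J}$ to be the maximally southeastern minor at $e$ (elusive by Claim 2.2 of Gao--Yong); extending it to $m_{I',J'}$ at $e'$ by appending the rows $i'-k+1,\dots,i'$ and columns $j'-k+1,\dots,j'$; and then showing $m_{I',J'}$ attends no $M^{[a,b]}$ with $(a,b)\in E(w)$ --- if $(a,b)$ were strictly southeast of $e$ this would violate the minimality of $k$, and otherwise attendance by $m_{I',J'}$ would force attendance by $m_{I,J}$, contradicting its elusiveness. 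Without this selection-and-elusiveness argument (or an equivalent one), your contrapositive only establishes non-reducedness for $w=2143$ itself.
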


The number of vexillary permutations in $S_n$ is asymptotic to $c9^nn^{-4}$ for some constant $c$ as shown by Macdonald \cite[pg. 22]{MacDonaldNotes}, while the number of permutations in $S_n$ is asymptotic to $\sqrt{2\pi n}\left(\frac{n}{e}\right)^n$ by Stirling's approximation. Theorem~\ref{thm:vexillary} therefore shows that the Gao--Yong Gr\"obner basis is reduced for a super-exponentially small percentage of $w\in S_n$ as $n\to\infty$.

All matrix Schubert varieties are normal \cite{Fulton}.
Therefore, those that are (affine) toric varieties with respect to some algebraic torus correspond exactly to \emph{binomial ideals} $I_w$ 
(meaning they can be generated by binomials).
Our next theorem
characterizes when $I_w$ is binomial:

\begin{theorem}\label{thm:binomial}
    The Schubert determinantal ideal $I_w$ is binomial if and only if $w$ avoids the patterns $1243$ and $2143$.
\end{theorem}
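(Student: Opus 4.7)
The plan is to use Theorem~\ref{thm:main} to translate the binomiality of $I_w$ into a bound on the degrees of the elements of its reduced Gr\"obner basis, and then to connect that degree bound to pattern avoidance through Fulton's essential set.

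For the direction ``$w$ avoids $\{1243,2143\}$ implies $I_w$ is binomial,'' I will establish the combinatorial lemma that $w$ avoids $\{1243,2143\}$ if and only if every position $(i,j)$ in the Rothe diagram of $w$ satisfies $r_w(i,j)\leq 1$. Once the lemma is in hand, Fulton's generators at essential positions are all $(r_{ij}+1)\times(r_{ij}+1)$ minors with $r_{ij}\leq 1$, so they are either variables or $2\times 2$ determinants; each has at most two terms, and $I_w$ is binomial.

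For the converse, assume $I_w$ is binomial. A standard consequence of Buchberger's algorithm (cf.\ Eisenbud--Sturmfels) is that the reduced Gr\"obner basis of a binomial ideal under any monomial order consists of binomials, i.e., polynomials with at most two terms. Applying this to $\G'_w$ under an antidiagonal order and combining with Theorem~\ref{thm:main}, which requires an element of degree $d$ to have at least $2^{d-1}$ terms, forces $d\leq 2$ for every element of $\G'_w$. Since $\G'_w$ and $\G_w$ are both minimal Gr\"obner bases for $I_w$ under the same order, they share the same multiset of leading monomials, so every element of $\G_w$ has degree at most $2$. As each Gao--Yong generator is an essential Fulton minor of degree $r_w(i,j)+1$, every essential position satisfies $r_w(i,j)\leq 1$. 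Since rank is monotone (weakly increasing in each coordinate) within the Rothe diagram, the same bound holds at every diagram position, and the combinatorial lemma gives that $w$ avoids $\{1243,2143\}$.

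The combinatorial lemma is proved by direct case analysis, and this is where I expect the main technical work to live. In one direction, given a diagram cell $(i,j)$ with $r_w(i,j)\geq 2$, there exist rows $i_1<i_2\leq i$ with $w(i_1),w(i_2)\leq j$, together with a row $i_4=w^{-1}(j)>i$ coming from the defining condition of the diagram; the corner case $i_2=i$ is ruled out by $w(i)>j\geq w(i_2)$, so $i_1<i_2<i<i_4$, and the quadruple $(w(i_1),w(i_2),w(i),w(i_4))$ realizes $1243$ if $w(i_1)<w(i_2)$ and $2143$ otherwise. Conversely, if $w$ contains $1243$ or $2143$ at $i_1<i_2<i_3<i_4$, then $(i_3,w(i_4))$ lies in the Rothe diagram (since $w(i_4)<w(i_3)$ and $i_4>i_3$) and its northwest submatrix contains the dots in rows $i_1$ and $i_2$, giving rank $\geq 2$. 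The algebraic inputs (Theorem~\ref{thm:main} and preservation of binomiality) then package this combinatorial statement into the theorem.
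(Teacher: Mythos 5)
Your proposal is correct and follows essentially the same route as the paper: the forward direction via Fulton's generators being at most $2\times 2$ minors, the converse via the Eisenbud--Sturmfels fact that reduced Gr\"obner bases of binomial ideals are binomial combined with the $2^{d-1}$ bound of Theorem~\ref{thm:main}, and your combinatorial lemma is exactly the $k=2$ case of the paper's Proposition~\ref{prop:patterns} (stated on $D(w)$ rather than $E(w)$, which is equivalent since $r_w$ is constant on connected components of the diagram).
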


In \cite[Theorem 3.4]{ToricMatrixSchubs}, Escobar--M\'esz\'aros presented a combinatorial characterization of matrix Schubert varieties that are toric with respect to a certain natural torus action. This characterization leaves open the possibility of other matrix Schubert varieties that are toric with respect to a different action.  Theorem~\ref{thm:binomial} complements their result by showing that the toric matrix Schubert varieties Escobar--M\'esz\'aros identified are the only ones that exist.

By Theorem~\ref{thm:binomial}, toric matrix Schubert varieties are a subclass of vexillary matrix Schubert varieties. Special tools apply in the vexillary case (see \cite{KMY} and the references therein). 

The permutations in $S_n$ avoiding the patterns $1243$ and $2143$ have been previously studied. In \cite[Corollary 9]{Kremer}, Kremer proved that they are enumerated by the \textit{large Schr\"oder numbers} $s_{n-1}$ (OEIS sequence A006318). The large Schr\"oder numbers have generating function
\begin{equation*}
    G(x) = \frac{1-x-\sqrt{x^2-6x+1}}{2x}
\end{equation*}
and satisfy the recurrence relation

\begin{equation*}
    s_n = \frac{6n-3}{n+1}s_{n-1}-\frac{n-2}{n+1}s_{n-2}\quad(n\geq 2).
\end{equation*}

As a third application we consider formulas for the \textit{(Castelnuovo--Mumford) regularity} of binomial $I_w$.  Regularity is a homological invariant that roughly describes the complexity of a module. In \cite[Theorem 1.5]{SchubReg}, Rajchgot--Robichaux--Weigandt gave a formula for the regularity of vexillary $I_w$\footnote{Pechenik--Speyer--Weigandt gave a formula for the regularity of \textit{all} $I_w$ in \cite[Theorem 1.1]{SchubReg2}.}, while in \cite[Corollary 6.7]{ToricReg} Almousa--Dochtermann--Smith gave a formula for the regularity of toric edge ideals of bipartite graphs. By Theorem~\ref{thm:binomial} and Portakal's interpretation of binomial $I_w$ as toric edge ideals \cite[pg. 7]{ToricEdgeIdeals}, these two formulas must agree for binomial $I_w$. We give a direct proof of this fact.

\subsection*{Organization}
Section~\ref{section2} contains preliminary definitions and results needed for the proof of Theorem~\ref{thm:main}. We introduce standard facts about reduced Gr\"obner bases from \cite{CLO} and recall work of Gao--Yong \cite{GaoYong} that produces a Gr\"obner basis for $I_w$ that is also a minimal generating set. Section~
\ref{section3} contains the proof of Theorem~\ref{thm:main}. In Section~\ref{section4} we prove Theorem~\ref{thm:vexillary}, and Section~\ref{section5} contains the proof of Theorem~\ref{thm:binomial}. Section~\ref{section6} reviews regularity along with the formulas of Rajchgot--Robichaux--Weigandt \cite{SchubReg} and Almousa--Dochtermann--Smith~\cite{ToricReg} before proving their formulas agree for binomial $I_w$.

\section{Background}\label{section2}
\subsection{Reduced Gr\"obner bases}
    We review standard facts about Gr\"obner bases needed in our proofs. We use Cox, Little, and O'Shea's book \cite{CLO} as our reference, following their terminology and notation. All ideals in this section belong to the ring $\Bbbk[x_1,\dots,x_n]$.

    \begin{definition}
        The \textit{lead term} of a polynomial $f$ with respect to a term order $<$ on $\Bbbk[x_1,\dots,x_n]$ is denoted $LT(f)$. The \textit{initial ideal} of an ideal $I$ is $\init(I) = \langle LT(f)|f\in I\rangle$.
    \end{definition}
    
    \begin{definition}
        A \textit{Gr\"obner basis} for an ideal $I$ with respect to a term order $<$ is a finite subset $\G = \{g_1,\dots,g_s\}\subseteq I$ such that $\init(I)$ is generated by $\{LT(g_1),\dots,LT(g_s)\}$.
    \end{definition}
    
    \begin{definition}
        Given two polynomials $f, g\in \Bbbk[x_1,\dots,x_n]$, their \textit{S-polynomial} is
        \begin{equation*}
            S(f, g) = \frac{\lcm(LT(f), LT(g))}{LT(f)}f - \frac{\lcm(LT(f), LT(g))}{LT(g)}g.
        \end{equation*}
    \end{definition}

    The results in the next theorem appear as Corollary 2.5.6 and Theorem 2.7.2 in \cite{CLO}. We write $\overline{f}^A$ for the remainder when $f$ is divided by the (ordered) elements of $A$.
    
    \begin{theorem}\label{thm:grobref1}
        Let $I\subseteq \Bbbk[x_1,\dots,x_n]$ be any ideal. Then
        \begin{enumerate}
            \item $I$ has a Gr\"obner basis.
            \item Every Gr\"obner basis for $I$ is a generating set for $I$.
            \item (Buchberger's Algorithm) Any generating set $G$ for $I$ can be enlarged to a Gr\"obner basis $\G$ via the following finite algorithm. Begin with $\G_0 = G$. Iteratively set $\G_i = \G_{i-1}\cup\{\overline{S(g, h)}^{\G_{i-1}}\}$, where $g$ and $h$ are elements of $\G_{i-1}$ such that $\overline{S(g, h)}^{\G_{i-1}}\neq 0$. Return $\G = \G_k$ when there are no such elements $g$ and $h$ in $\G_k$. 
        \end{enumerate}
    \end{theorem}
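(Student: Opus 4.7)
The plan is to prove the three parts in order, with Dickson's lemma (finite generation of monomial ideals) as the underlying engine. For part (1), I would observe that $\init(I)$ is by definition a monomial ideal in $\Bbbk[x_1,\dots,x_n]$, so Dickson's lemma produces monomial generators $m_1,\dots,m_s$. Each $m_i$ equals $LT(f_i)$ for some $f_i\in I$, and then $\{f_1,\dots,f_s\}$ is a Gr\"obner basis for $I$ essentially by definition.

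For part (2), let $\G=\{g_1,\dots,g_s\}$ be a Gr\"obner basis and take any $f\in I$. I would run the multivariate division algorithm to get $f=\sum q_ig_i+r$ where no term of $r$ is divisible by any $LT(g_i)$. Since $r=f-\sum q_ig_i$ lies in $I$, if $r$ were nonzero then $LT(r)\in \init(I)=\langle LT(g_1),\dots,LT(g_s)\rangle$ would be divisible by some $LT(g_i)$, contradicting the defining property of the remainder. Hence $r=0$ and $f\in\langle g_1,\dots,g_s\rangle$, so $\G$ generates $I$.

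Part (3) has two components. The correctness of the stopping criterion is Buchberger's criterion: $\G$ is a Gr\"obner basis if and only if $\overline{S(g,h)}^{\G}=0$ for every pair $g,h\in\G$. For termination, I would observe that whenever a nonzero remainder $\overline{S(g,h)}^{\G_{i-1}}$ is adjoined to $\G_{i-1}$, its leading term lies outside $\langle LT(g):g\in\G_{i-1}\rangle$ (otherwise the division algorithm would have reduced it to zero), so the chain of monomial ideals $\langle LT(g):g\in\G_i\rangle$ strictly ascends. By Dickson's lemma this chain must stabilize after finitely many steps, forcing the algorithm to halt.

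The main obstacle is Buchberger's criterion itself. The heart of that argument is a cancellation lemma: if $\sum_i c_i x^{\alpha_i} g_i$ has leading terms all equal to a common monomial $x^\delta$ but the sum has strictly smaller leading term, then this combination can be rewritten as a $\Bbbk$-linear combination of S-polynomials $S(g_i,g_j)$ times monomials. Granted this, one shows that if every S-polynomial reduces to zero, then any $f\in I$ admits a standard expression $f=\sum h_ig_i$ with $LT(f)=\max_i LT(h_ig_i)$, which is exactly what is needed to conclude $LT(f)\in\langle LT(g_i)\rangle$.
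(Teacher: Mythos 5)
The paper gives no proof of this statement, citing it directly from Cox--Little--O'Shea (Corollary 2.5.6 and Theorem 2.7.2 of \cite{CLO}), and your sketch correctly reproduces exactly the standard argument found there: Dickson's lemma for existence, the division algorithm for part (2), and Buchberger's criterion plus the ascending chain of initial ideals for termination and correctness of part (3). Your outline is sound and identifies the one genuinely nontrivial ingredient (the cancellation lemma behind Buchberger's criterion), so nothing further is needed.
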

    
    Gr\"obner bases are non-minimal and non-unique in general, but with additional steps we can identify a special type of Gr\"obner basis which is unique for a given ideal and term order.
    
    \begin{definition}
        A Gr\"obner basis $\G$ for an ideal $I$ with respect to a term order $<$ is \textit{minimal} if $LT(g_1)$ does not divide $LT(g_2)$ for all $g_1,g_2\in\G$. Equivalently, $\G$ is a minimal Gr\"obner basis if the lead terms of its elements form the unique minimal generating set for $\init(I)$.
    \end{definition}

    \begin{remark}
        If $\G$ is a non-minimal Gr\"obner basis for $I$, then for some $g\in \G$, $\G\setminus\{g\}$ is a Gr\"obner basis for $I$. Since all Gr\"obner bases for $I$ are generating sets by Theorem~\ref{thm:grobref1}(2), it follows that $\G$ is not a minimal generating set for $I$. Thus any minimal generating set for $I$ that is a Gr\"obner basis is a minimal Gr\"obner basis. The converse does not hold in general.
    \end{remark}

    \begin{definition}
        A Gr\"obner basis $\G'$ for an ideal $I$ with respect to a term order $<$ is \textit{reduced} if for any two generators $g_1, g_2\in\G'$, $LT(g_1)$ does not divide any term of $g_2$.
    \end{definition}
    
    \begin{theorem}[{\cite[Theorem 5, Section 2.7]{CLO}}]\label{thm:grobref2} Let $I$ be an ideal. Then
        \begin{enumerate}
            \item $I$ has a unique reduced Gr\"obner basis $\G'$ up to scalar multiplication.
            \item Any Gr\"obner basis $\G$ for $I$ can be reduced to $\G'$ via the following finite algorithm. First reduce $\G$ to a minimal Gr\"obner basis $\G_0$ by removing each $g\in \G$ such that $LT(h)$ divides $LT(g)$ for some $h\in \G$. Say $\G_0 = \{g_1,\dots,g_k\}$. For $1\leq i\leq k$ let $\G'_i = (\G_{i-1}\setminus\{g_i\})$, let $g'_i = \overline{g_i}^{\G'_{i-1}}$, and set $\G_i = \G'_i\cup\{g'_i\}$. Then $\G_k = \G'$ is the reduced Gr\"obner basis of $I$.
        \end{enumerate}
    \end{theorem}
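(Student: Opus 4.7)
The plan is to prove uniqueness (part 1) and correctness of the reduction algorithm (part 2); existence of a reduced Gr\"obner basis then follows by applying the algorithm to any Gr\"obner basis produced by Theorem~\ref{thm:grobref1}(1). The guiding principle is that a reduced Gr\"obner basis is determined canonically by the unique minimal monomial generating set of $\init(I)$, together with, for each such generator $m$, the unique ``tail'' of monomials strictly less than $m$ making $m + \textrm{tail} \in I$.

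For part (1), the first step is to observe that the set of lead monomials of any minimal Gr\"obner basis $\G$ for $I$ equals the unique minimal monomial generating set of $\init(I)$. This uses the standard fact that every monomial ideal has a canonical minimal generating set consisting of its divisibility-minimal elements, combined with the definition of minimal Gr\"obner basis. Given two reduced Gr\"obner bases $\G'$, $\G''$, normalized so leading coefficients are $1$, they therefore share the same set of lead monomials. For each such monomial $m$, let $g' \in \G'$ and $g'' \in \G''$ be the corresponding elements; then $g' - g'' \in I$ with lead term strictly less than $m$. If $g' - g'' \neq 0$, its lead term lies in $\init(I)$ and so is divisible by some shared lead monomial; but every term of $g' - g''$ is a non-lead term of $g'$ or $g''$, and reducedness of both bases against the common lead-term set forbids this. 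Hence $g' = g''$.

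For part (2), I would verify that both the pruning step and the iterative replacement step preserve the invariant that $\G_i$ is a minimal Gr\"obner basis with the lead-term set fixed by part (1). Pruning (removing $g$ whenever $LT(h) \mid LT(g)$ for some other $h$) plainly preserves generation of $\init(I)$, since the deleted $LT(g)$ is a multiple of a retained lead term; iterating yields the unique minimal lead-monomial set. For the replacement $g_i \mapsto g'_i = \overline{g_i}^{\G'_{i-1}}$, minimality ensures that no $LT(h)$ for $h \in \G'_{i-1}$ divides $LT(g_i)$, so the division process cannot touch the lead term; thus $LT(g'_i) = LT(g_i)$ and $\G_i$ remains a minimal Gr\"obner basis. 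By definition of remainder, no term of $g'_i$ is divisible by any $LT(h)$ for $h \in \G'_{i-1}$. Since lead terms never change across the algorithm, the reducedness of previously processed $g'_j$ ($j < i$) against the rest of the basis persists after $g_i$ is modified. After $k$ steps the basis is fully reduced.

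The main obstacle, I expect, is the cancellation argument in part (1): concluding that $g' - g'' = 0$ requires combining the Gr\"obner basis property of $\G'$ (every nonzero element of $I$ has lead term in $\init(I)$) with reducedness of \emph{both} bases, applied through the common lead-monomial set identified in the first step. The algorithmic portion of (2) is essentially bookkeeping, but one must be careful that modifying $g_i$ does not disturb the reducedness of earlier $g'_j$; this ultimately reduces to the invariance of lead terms under the division-remainder operation.
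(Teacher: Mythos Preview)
The paper does not supply its own proof of this theorem: it is quoted verbatim as a background result from \cite{CLO} and used as a black box. Your argument is correct and is essentially the standard proof one finds in that reference---uniqueness via the cancellation $g'-g''\in I$ together with reducedness against the shared minimal lead-monomial set of $\init(I)$, and correctness of the algorithm via the invariant that lead terms are unchanged by division against a minimal basis. There is nothing to compare on the paper's side.
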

    
    For the proof of Theorem~\ref{thm:binomial} we need an additional fact characterizing the reduced Gr\"obner bases of binomial ideals. This appears as Proposition 1.1(a) in \cite{EisenbudSturmfels}. However, since the proof is short we provide it for convenience.

    \begin{prop}[{\cite[Proposition~1.1(a)]{EisenbudSturmfels}}]\label{prop:binomial}
        If $I$ is a binomial ideal, then for any term order $<$ the reduced Gr\"obner basis $\G'$ of $I$ with respect to $<$ consists of binomials.
    \end{prop}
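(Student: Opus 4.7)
The plan is to follow both Buchberger's algorithm (Theorem~\ref{thm:grobref1}(3)) and the reduction procedure of Theorem~\ref{thm:grobref2}(2), verifying at every step that binomiality is preserved. Since $I$ is binomial, I start with a generating set of binomials. The two key claims I need are: (i) the $S$-polynomial of two binomials is again a binomial; and (ii) the remainder of a binomial divided by a list of binomials (via the division algorithm) is again a binomial. Granting these, Buchberger's algorithm terminates at a binomial Gr\"obner basis $\G$, and the reduction procedure (which discards elements of $\G$ and then replaces each surviving generator by its remainder upon division by the others) preserves binomiality. By the uniqueness half of Theorem~\ref{thm:grobref2}(1) the output is the reduced Gr\"obner basis $\G'$, and it consists of binomials.

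Claim (i) is a direct computation from the definition of $S$-polynomial. Write $f = ax^\alpha + bx^\beta$ and $g = cx^\gamma + dx^\delta$ with $LT(f) = ax^\alpha$, $LT(g) = cx^\gamma$, and let $x^\mu = \lcm(x^\alpha, x^\gamma)$. By construction, the two multiples of $x^\mu$ in $S(f, g)$ cancel, leaving a polynomial supported on at most the two monomials $x^{\mu - \alpha + \beta}$ and $x^{\mu - \gamma + \delta}$. Hence $S(f, g)$ has at most two terms.

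For claim (ii), I would induct along the division algorithm, which maintains a running dividend $p$ and accumulated remainder $r$. Initially $p$ is the binomial being divided and $r = 0$; at termination $p = 0$ and $r$ is the output. At each step either (a) $LT(p)$ is divisible by $LT(f_i)$ for some divisor $f_i$ and $p$ is replaced by $p - (LT(p)/LT(f_i))f_i$, or (b) $LT(p)$ is moved from $p$ to $r$. The key invariant is that the total number of distinct monomials appearing in $p$ and $r$ combined never exceeds the starting value $2$: in case (a), $p$ loses $LT(p)$ and gains at most one new monomial (the scaled non-leading term of $f_i$), so $|p|$ is preserved or decreases while $|r|$ is unchanged; in case (b), $|p|$ decreases by one and $|r|$ increases by one. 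Since $p = 0$ at termination, $|r| \leq 2$.

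The main obstacle is getting the invariant in claim (ii) precisely right: I must verify that the monomial introduced in case (a) is the only possible source of growth in $|p|$, while accounting for coefficient cancellations that can only reduce (never grow) the term count. Once this invariant is established, the proposition is assembled by feeding (i) and (ii) into Theorems~\ref{thm:grobref1} and~\ref{thm:grobref2} as outlined in the first paragraph.
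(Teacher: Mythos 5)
Your proposal is correct and follows essentially the same route as the paper: start from a binomial generating set, observe that $S$-polynomials of binomials are binomials, and note that division by binomials cannot increase the number of terms, so both Buchberger's algorithm and the reduction to the reduced basis preserve binomiality. Your claim (ii), with its invariant on the combined term count of the running dividend and remainder, merely makes explicit a step the paper dispatches with the remark that reduction ``can never increase the number of terms in a generator,'' so it is added precision rather than a different argument.
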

    \begin{proof}
        If $I$ is a binomial ideal, then $I = (m_1-m'_1,\dots,m_k-m'_k)$ for some monomials $m_i$ and $m'_i$ with $m_i > m'_i$ for each $i$. Buchberger's algorithm extends this generating set to a Gr\"obner basis by iteratively adding new generators of the form
        \begin{equation}\nonumber
            \frac{\lcm(m_i, m_j)}{m_i}(m_i-m'_i)-\frac{\lcm(m_i, m_j)}{m_j}(m_j-m'_j) = \frac{\lcm(m_i, m_j)}{m_j}m'_j - \frac{\lcm(m_i, m_j)}{m_i}m'_i.
        \end{equation}
        
        Since each new generator is itself a binomial, Buchberger's algorithm constructs a Gr\"obner basis $\G$ consisting only of binomials. The algorithm in Theorem~\ref{thm:grobref2}(2) then reduces $\G$ to the reduced Gr\"obner basis $\G'$ by removing generators and performing polynomial divisions. This can never increase the number of terms in a generator. Thus $\G'$ consists of binomials.
    \end{proof}
    
\subsection{Generators for $I_w$}
    Our main results build on prior work concerning Gr\"obner bases of Schubert determinantal ideals $I_w$. We collect the key facts needed for our proof below.

    \begin{definition}
        The \textit{Rothe diagram} of a permutation $w\in S_n$ is the set $D(w) = \{(i, j)\in [n]\times[n]: j<w(i), i<w\inv(j)\}$.
    \end{definition}

    \begin{definition}[\cite{Fulton}]
        For a permutation $w\in S_n$, \textit{Fulton's essential set} is the subset $E(w)\subseteq D(w)$ consisting of pairs $(i, j)\in D(w)$ such that $(i+1, j)$ and $(i, j+1)$ are not in $D(w)$. Visually, $E(w)$ consists of all ``southeast corners" of connected components of $D(w)$. 
    \end{definition}

    The first part of Theorem~\ref{thm:schubref} below was proved by Fulton as \cite[Lemma 3.10]{Fulton}, while the second part was established by Knutson and Miller as \cite[Theorem B]{KnutsonMiller}.\\
    
    \begin{theorem}[\cite{Fulton, KnutsonMiller}]\label{thm:schubref}
        Let $w\in S_n$ be a permutation. Then
        \begin{enumerate}
            \item $I_w$ is generated by the $(r_{ij}+1)\times(r_{ij}+1)$ minors of $M^{[i, j]}$ as $(i, j)$ varies over the essential set $E(w)$.
            \item This generating set forms a Gr\"obner basis for $I_w$ under any antidiagonal term order (i.e., any term order such that the antidiagonal of a generic minor is the lead term).
        \end{enumerate}
    \end{theorem}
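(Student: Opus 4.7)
The plan is to prove the two parts in turn.

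For part~(1), the strategy is to propagate rank conditions inductively from essential-set positions. Let $J_w$ denote the ideal generated by the $(r_{ij}+1)\times(r_{ij}+1)$ minors of $M^{[i,j]}$ as $(i,j)$ ranges over $E(w)$; since $J_w\subseteq I_w$ is immediate, the task is the reverse containment. Suppose $(i,j)\in D(w)\setminus E(w)$. By definition of $E(w)$, at least one of $(i+1,j)$ and $(i,j+1)$ lies in $D(w)$, say the latter. The condition $(i,j+1)\in D(w)$ forces $w\inv(j+1)>i$, so no $1$ of $M_w$ appears in column $j+1$ at or above row $i$, giving $r_w(i,j+1)=r_w(i,j)$. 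Since $M^{[i,j]}$ is a northwest submatrix of $M^{[i,j+1]}$, every $(r_w(i,j)+1)$-minor of $M^{[i,j]}$ is itself an $(r_w(i,j+1)+1)$-minor of $M^{[i,j+1]}$, so by induction on the southeast distance from $(i,j)$ to the nearest essential position these minors lie in $J_w$. Positions $(i,j)\notin D(w)$ are handled by the analogous observation that the rank bound there is implied by a rank bound at a position of $D(w)$ strictly southeast.

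For part~(2), the plan is to compare multigraded Hilbert series. Write $J$ for the ideal generated by the antidiagonal leading terms of the Fulton minors at essential positions. The inclusion $J\subseteq\init(I_w)$ is automatic, so it is enough to show equality, for which it is enough to prove that $R/J$ and $R/I_w$ share the same $\Z^{2n}$-graded Hilbert series under the standard torus action on $\M_n$. Since $J$ is generated by squarefree monomials, $R/J$ is the Stanley--Reisner ring of a simplicial complex $\Delta_w$, and its Hilbert series decomposes as a sum over the facets of $\Delta_w$. The combinatorial heart of the argument is an identification of these facets with reduced pipe dreams of $w$. On the geometric side, the multidegree of $X_w$ in $\M_n$ is the double Schubert polynomial $\mathfrak{S}_w(x;y)$, which by the theorem of Billey--Jockusch--Stanley and Fomin--Stanley admits a formula summing over the same set of reduced pipe dreams. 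A term-by-term comparison then yields the Hilbert series equality and hence the Gr\"obner basis claim.

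The main obstacle is the facet-to-pipe-dream identification in part~(2). One direction --- that every reduced pipe dream gives a facet of $\Delta_w$ --- follows by checking that the complement of a pipe dream contains no complete antidiagonal of any Fulton generator, and this can be done directly from the local configuration of $+$-tiles. The harder direction, that every facet of $\Delta_w$ arises in this way, requires either a dimension count using $\dim\Delta_w = n^2-\ell(w)$ together with an explicit construction of the missing pipe dream, or an exchange-style argument on antidiagonals. This combinatorial step is where essentially all the work lies; once it is in hand, the remainder of the proof is formal.
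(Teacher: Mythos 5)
This theorem is quoted by the paper from Fulton (part (1) is his Lemma 3.10) and Knutson--Miller (part (2) is their Theorem B); the paper supplies no proof, so your proposal must be measured against the known arguments. For part (1), your reduction within $D(w)$ is correct: if $(i,j+1)\in D(w)$ then $r_w(i,j+1)=r_w(i,j)$ and the relevant minors of $M^{[i,j]}$ are literally among those of $M^{[i,j+1]}$. But the sentence handling $(i,j)\notin D(w)$ is false as stated. Take $w=1432$: then $E(w)=\{(2,3),(3,2)\}$ with rank $1$ at both, while at $(3,3)\notin D(w)$ one has $r_w(3,3)=2<3$, so the condition there (vanishing of $\det M^{[3,3]}$) is not vacuous, yet there is \emph{no} position of $D(w)$ weakly southeast of $(3,3)$. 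The generator is nonetheless in the essential ideal, but via a different mechanism: cofactor expansion of $\det M^{[3,3]}$ along its bottom row writes it as an $R$-combination of the $2\times 2$ minors of $M^{[2,3]}$, i.e., one descends to \emph{lower-rank} conditions at positions to the north and west. Fulton's induction needs this Laplace-expansion step (and the vacuity case $r_w(i,j)=\min(i,j)$) in addition to the submatrix-inclusion step you describe; without it the argument does not close.

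For part (2) your outline is the Knutson--Miller strategy, and you correctly locate the combinatorial heart (identifying the facets of the antidiagonal complex with reduced pipe dreams, which they accomplish via mitosis/subword complexes), but as written the proposal defers exactly that step and also elides the bridge from multidegrees to Hilbert series. The sum over \emph{facets} of $\Delta_w$ computes the multidegree, not the $\Z^{2n}$-graded Hilbert series (which is a sum over all faces), and a containment $J\subseteq\init(I_w)$ of ideals of the same codimension with equal multidegrees does not by itself force equality --- lower-dimensional or embedded components are invisible to the multidegree (e.g.\ $(x^2,xy)\subseteq(x)$ have equal multidegree). One must either prove the full $K$-polynomial/Grothendieck-polynomial identity (equivalently, control all faces, e.g.\ via shellability of the subword complex), or invoke Knutson--Miller's positivity-of-multidegrees lemma together with purity of $\Delta_w$ to rule out excess components. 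So the proposal is a correct road map for part (2) but not a proof: the two places where you say the remaining work is ``formal'' are precisely where the published argument expends its effort.
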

    
    \begin{example}
        Consider the permutations $w = 14235$ and $v = 31254$. Their Rothe diagrams are pictured below, with the values of the rank function $r_{ij}$ displayed only in the elements of the essential sets.
    
    \begin{figure}[h]
    \centering
    \begin{subfigure}{0.45\textwidth}
    \centering
    \begin{tikzpicture}[scale = 0.30]
       \draw[black, thick] (0,0) -- (10,0) -- (10,10) -- (0,10) -- (0,0);
        \draw[black, thick] (2,6) -- (6,6) -- (6,8) -- (2,8) -- (2,6);
        \draw[black, thick] (4, 6) -- (4, 8);
        \draw[black, thick] (1, 9) -- (1,0);
        \draw[black, thick] (1, 9) -- (10,9);
        \draw[black, thick] (3, 5) -- (3,0);
        \draw[black, thick] (3, 5) -- (10,5);
        \draw[black, thick] (5, 3) -- (5,0);
        \draw[black, thick] (5, 3) -- (10,3);
        \draw[black, thick] (7, 7) -- (7,0);
        \draw[black, thick] (7, 7) -- (10,7);
        \draw[black, thick] (9, 1) -- (9,0);
        \draw[black, thick] (9, 1) -- (10,1);
        \draw (1,9) node{$\bullet$};
        \draw (3,5) node{$\bullet$};
        \draw (5,3) node{$\bullet$};
        \draw (7,7) node{$\bullet$};
        \draw (9, 1) node{$\bullet$};
        \draw (5, 7) node{1};
    \end{tikzpicture}
    \caption*{\normalsize{$w = 14235$}}
    \end{subfigure}
    \begin{subfigure}{0.45\textwidth}
    \centering
    \begin{tikzpicture}[scale = 0.30]
        \draw[black, thick] (0,0) -- (10,0) -- (10,10) -- (0,10) -- (0,0);
        \draw[black, thick] (0, 8) -- (4, 8) -- (4, 10) -- (0, 10) -- (0, 8);
        \draw[black, thick] (6, 2) -- (8, 2) -- (8, 4) -- (6, 4) -- (6, 2);
        \draw[black, thick] (2, 8) -- (2, 10);
        \draw[black, thick] (1, 7) -- (1,0);
        \draw[black, thick] (1, 7) -- (10,7);
        \draw[black, thick] (3, 5) -- (3,0);
        \draw[black, thick] (3, 5) -- (10,5);
        \draw[black, thick] (5, 9) -- (5,0);
        \draw[black, thick] (5, 9) -- (10,9);
        \draw[black, thick] (7, 1) -- (7,0);
        \draw[black, thick] (7, 1) -- (10,1);
        \draw[black, thick] (9, 3) -- (9,0);
        \draw[black, thick] (9, 3) -- (10,3);
        \draw (1,7) node{$\bullet$};
        \draw (3,5) node{$\bullet$};
        \draw (5,9) node{$\bullet$};
        \draw (7,1) node{$\bullet$};
        \draw (9, 3) node{$\bullet$};
        \draw (3, 9) node{0};
        \draw (7, 3) node{3};
    \end{tikzpicture}
    \caption*{\normalsize{$v = 31254$}}
    \end{subfigure}
    \vspace{-3mm}
    \end{figure}
    
    Theorem~\ref{thm:schubref} asserts that $I_w$ is generated by the three $2\times2$ minors of $M^{[2, 3]}$. Similarly, $I_v$ is generated by the two $1\times1$ minors of $M^{[1, 2]}$ and the one $4\times4$ minor of $M^{[4, 4]}$.
    \end{example}

    In \cite[Theorem 1.6]{GaoYong}, Gao and Yong refine Fulton's generators to a \emph{minimal generating set}, that is, one where no generator can be removed without changing the ideal. To state their result, which is presented as Theorem~\ref{thm:schubertgrobner} below, we need some additional definitions. For sets $I, J\subseteq [n]$, let $m_{I,J}$ denote the minor of the generic matrix $[x_{ij}]_{1\leq i, j\leq n}$ using rows $I$ and columns $J$. We say $m_{I, J}$ \textit{belongs} to some $(i, j)$ in the essential set $E(w)$ if it is a Fulton generator associated to $(i, j)$, i.e. if $I\subseteq [i]$, $J\subseteq [j]$, and the rank of $m_{I, J}$ is $r(i, j)+1$.
    
    \begin{definition}
        A minor $m_{I, J}$ \textit{attends} $M^{[i', j']}$ if $|I\cap [i']| > r(i', j')$ and $|J\cap [j']| = r(i, j)+1$, or if $|I\cap[i']| = r(i, j)+1$ and $|J\cap[j']| > r(i', j')$.
    \end{definition}
    
    \begin{definition}
        A minor $m_{I, J}$ belonging to $(i, j)\in E(w)$ is \textit{elusive} if it does not attend $M^{[i', j']}$ for all elements $(i', j')$ in $E(w)$ such that $r(i', j') < r(i, j)$. 
    \end{definition}
    
    \begin{theorem}[{\cite[Theorem 1.6 and Corollary 1.8]{GaoYong}}]\label{thm:schubertgrobner}
        The Schubert determinantal ideal $I_w$ is minimally generated by the set $\G_w$ of elusive minors, which includes at least one minor with southeast corner $b$ for each $b\in D(w)$. $\G_w$ forms a Gr\"obner basis with respect to any antidiagonal term order.
    \end{theorem}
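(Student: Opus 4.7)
The plan is to begin with Fulton's Gr\"obner basis from Theorem~\ref{thm:schubref}, iteratively remove each non-elusive minor while preserving both $I_w$ and $\init(I_w)$, and then establish minimality as a generating set by a separate homological--combinatorial argument. Each removal splits naturally into an ideal-theoretic step and a lead-term step.

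For the removal, suppose $m_{I,J}$ is a Fulton generator belonging to $(i,j)\in E(w)$ that attends $M^{[i',j']}$ for some $(i',j')\in E(w)$ with $r(i',j')<r(i,j)$. By the symmetry of the two cases in the definition of ``attends,'' take $J\subseteq [j']$ and $|I\cap[i']|>r(i',j')$, and set $I'=I\cap[i']$. Laplace expansion along the rows $I'$ gives
\begin{equation*}
    m_{I,J} \;=\; \sum_{\substack{J'\subseteq J\\|J'|=|I'|}} \pm\, m_{I',J'}\cdot m_{I\setminus I',\,J\setminus J'}.
\end{equation*}
Every $m_{I',J'}$ is a minor of $M^{[i',j']}$ of size $>r(i',j')$ and thus lies in the ideal generated by the Fulton minors belonging to $(i',j')$ (by a further Laplace expansion), so $m_{I,J}$ is redundant. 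Choosing the $J'$ that matches the antidiagonal pairing of $(I,J)$ produces a summand whose antidiagonal exactly factors $LT(m_{I,J})$, so this lead term is also divisible by the antidiagonal of a Fulton generator at $(i',j')$. Iterating these removals until only elusive minors remain, $\G_w$ still generates both $I_w$ and $\init(I_w)$, yielding the Gr\"obner basis property.

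Minimality as a generating set is the harder half. Two ingredients suffice: (a) no antidiagonal of an elusive minor divides the antidiagonal of another, and (b) the graded Betti numbers of $I_w$ and $\init(I_w)$ agree, so the minimum number of generators of $I_w$ equals the minimum number of monomial generators of $\init(I_w)$. Given (a), the antidiagonals of $\G_w$ form a minimal monomial generating set of $\init(I_w)$; given (b), $\G_w$ itself must be minimal, since its cardinality matches the minimal-generator count of $I_w$. Claim (a) reduces to showing that a divisibility $LT(m_{I_1,J_1})\mid LT(m_{I_2,J_2})$ between lead monomials of two elusive minors forces $m_{I_2,J_2}$ to attend the essential-set element at which $m_{I_1,J_1}$ belongs, contradicting elusiveness of $m_{I_2,J_2}$. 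Claim (b) follows from the Knutson--Miller antidiagonal degeneration of matrix Schubert varieties, which preserves Betti tables.

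The main obstacle is implementing (a): one must translate the combinatorics of ``belongs,'' ``attends,'' and ``elusive'' into precise conditions on row- and column-index sets, and then verify that divisibility of the corresponding antidiagonal monomials implies the attending relation. The remaining assertion---existence of an elusive minor with southeast corner $b$ for each $b\in D(w)$---can then be obtained by producing an explicit ``staircase'' minor through $b$ whose index sets are arranged so that it attends no essential-set element of strictly smaller rank.
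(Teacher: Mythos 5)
First, note that the paper itself offers no proof of this statement: Theorem~\ref{thm:schubertgrobner} is quoted verbatim from Gao--Yong \cite[Theorem 1.6 and Corollary 1.8]{GaoYong} and used as a black box, so there is no in-paper argument to compare yours against. Judged on its own, the Gr\"obner-basis half of your sketch is essentially sound: a non-elusive Fulton generator attending $M^{[i',j']}$ does become redundant via the Laplace expansion you write down, and the antidiagonally-compatible choice of $J'$ does show its lead term is divisible by the lead term of a Fulton generator belonging to $(i',j')$. One repair is needed, though: that covering generator at $(i',j')$ need not itself be elusive, so ``iterating these removals'' must be organized as an induction on the rank $r(i,j)$ (the base case being that every Fulton generator at a minimum-rank essential box is vacuously elusive).

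The genuine gap is in the minimality half. Your claim (b) --- that the graded Betti numbers of $I_w$ and $\init(I_w)$ agree because the Knutson--Miller degeneration ``preserves Betti tables'' --- is not something Knutson--Miller prove, and it is not a formal property of Gr\"obner degenerations: semicontinuity only gives $\beta_{1,j}(I_w)\leq\beta_{1,j}(\init(I_w))$, which is the wrong direction for your argument, and the equality you need is essentially equivalent to the minimality statement being proved. So (b) as justified is circular. Moreover, claims (a) and the southeast-corner assertion are flagged as ``obstacles'' rather than established, and (a) is precisely where the combinatorial content of Gao--Yong's proof lives (their Claim~2.2, cited later in this paper in the proof of Theorem~\ref{thm:vexillary}, supplies the explicit maximally southeastern ``staircase'' minor at each essential box that your last paragraph gestures at). As written, the proposal reduces the theorem to two unproven assertions, one of which is false as a general principle.
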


\section{The reduced Gr\"obner basis for $I_w$}\label{section3}
    The work of Gao--Yong \cite{GaoYong} summarized in Theorem~\ref{thm:schubertgrobner} describes a minimal Gr\"obner basis for any Schubert determinantal ideal $I_w$. The following examples illustrate that although the minimal Gr\"obner basis $\G_w$ of elusive minors and the reduced Gr\"obner basis $\G'_w$ have the same number of generators, they may not be identical. 
    
    \begin{example}
        Consider $I_w$ for $w = 31542$, which has the following Rothe diagram:
        
        \begin{figure}[h]
        \centering
        \begin{tikzpicture}[scale = 0.30]
           \draw[black, thick] (0,0) -- (10,0) -- (10,10) -- (0,10) -- (0,0);
            \draw[black, thick] (0,8) -- (4,8) -- (4,10) -- (0,10) -- (0,8);
            \draw[black, thick] (2, 2) -- (4, 2) -- (4, 6) -- (2, 6) -- (2, 2);
            \draw[black, thick] (6, 4) -- (8, 4) -- (8, 6) -- (6, 6) -- (6, 4);
            \draw[black, thick] (2, 8) -- (2, 10);
            \draw[black, thick] (2, 4) -- (4, 4);
            \draw[black, thick] (1, 7) -- (1,0);
            \draw[black, thick] (1, 7) -- (10,7);
            \draw[black, thick] (3, 1) -- (3,0);
            \draw[black, thick] (3, 1) -- (10,1);
            \draw[black, thick] (5, 9) -- (5,0);
            \draw[black, thick] (5, 9) -- (10,9);
            \draw[black, thick] (7, 3) -- (7,0);
            \draw[black, thick] (7, 3) -- (10,3);
            \draw[black, thick] (9, 5) -- (9,0);
            \draw[black, thick] (9, 5) -- (10,5);
            \draw (1,7) node{$\bullet$};
            \draw (3,1) node{$\bullet$};
            \draw (5,9) node{$\bullet$};
            \draw (7,3) node{$\bullet$};
            \draw (9,5) node{$\bullet$};
            \draw (3, 9) node{0};
            \draw (3, 3) node{1};
            \draw (7, 5) node{2};
        \end{tikzpicture}
        \end{figure}
        
        The Knutson--Miller Gr\"obner basis $\G_1$ consists of two $1\times 1$ minors, ${4\choose 2}$ $2\times 2$ minors, and ${4\choose 3}$ $3\times 3$ minors. The minimal Gr\"obner basis $\G_w$ of Gao--Yong refines this to the following:
        $$\G_w = \left\{x_{11}, x_{12}, 
        \begin{vmatrix}
            x_{21} & x_{22}\\
            x_{31} & x_{32}\\
        \end{vmatrix},
        \begin{vmatrix}
            x_{21} & x_{22}\\
            x_{41} & x_{42}\\
        \end{vmatrix},
        \begin{vmatrix}
            x_{31} & x_{32}\\
            x_{41} & x_{42}\\
        \end{vmatrix},
        \begin{vmatrix}
            x_{11} & x_{13} & x_{14}\\
            x_{21} & x_{23} & x_{24}\\
            x_{31} & x_{33} & x_{34}
        \end{vmatrix},
        \begin{vmatrix}
            x_{12} & x_{13} & x_{14}\\
            x_{22} & x_{23} & x_{24}\\
            x_{32} & x_{33} & x_{34}
        \end{vmatrix}\right\}.$$
        
        Note that some terms of the $3\times 3$ minors are divisible by the generators $x_{11}$ and $x_{12}$, so this Gr\"obner basis is not reduced. Applying the algorithm of Theorem~\ref{thm:grobref2}(2), we find that in the reduced Gr\"obner basis $\G'_w$ the degree-$3$ generators each have only four terms. Explicitly,
        \begin{multline*}
            \G'_w = \biggl\{ x_{11}, x_{12}, 
            \begin{vmatrix}
                x_{21} & x_{22}\\
                x_{31} & x_{32}\\
            \end{vmatrix},
            \begin{vmatrix}
                x_{21} & x_{22}\\
                x_{41} & x_{42}\\
            \end{vmatrix},
            \begin{vmatrix}
                x_{31} & x_{32}\\
                x_{41} & x_{42}\\
            \end{vmatrix},\\
            x_{14}\begin{vmatrix}
                x_{21} & x_{23}\\
                x_{31} & x_{33}\\
            \end{vmatrix}
            - x_{13}\begin{vmatrix}
                x_{21} & x_{24}\\
                x_{31} & x_{34}\\
            \end{vmatrix},\ 
            x_{14}\begin{vmatrix}
                x_{22} & x_{23}\\
                x_{32} & x_{33}\\
            \end{vmatrix}
            - x_{13}\begin{vmatrix}
                x_{22} & x_{24}\\
                x_{32} & x_{34}\\
            \end{vmatrix}\biggr\}.
        \end{multline*}
    \end{example}

    \begin{example}\label{ex:extreme}
        Consider $I_w$ for $w = 32154$. The Rothe diagram for $w$ is pictured below.
        
        \begin{figure}[h]
        \centering
        \begin{tikzpicture}[scale = 0.30]
            \draw[black, thick] (0,0) -- (10,0) -- (10,10) -- (0,10) -- (0,0);
            \draw[black, thick] (0, 6) -- (2, 6) -- (2, 8) -- (0, 8) -- (0, 6);
            \draw[black, thick] (0, 8) -- (2, 8) -- (2, 10) -- (0, 10) -- (0, 8);
            \draw[black, thick] (2, 8) -- (4, 8) -- (4, 10) -- (2, 10) -- (2, 8);
            \draw[black, thick] (6, 2) -- (8, 2) -- (8, 4) -- (6, 4) -- (6, 2);
            \draw[black, thick] (1, 5) -- (1,0);
            \draw[black, thick] (1, 5) -- (10,5);
            \draw[black, thick] (3, 7) -- (3,0);
            \draw[black, thick] (3, 7) -- (10,7);
            \draw[black, thick] (5, 9) -- (5,0);
            \draw[black, thick] (5, 9) -- (10,9);
            \draw[black, thick] (7, 1) -- (7,0);
            \draw[black, thick] (7, 1) -- (10,1);
            \draw[black, thick] (9, 3) -- (9,0);
            \draw[black, thick] (9, 3) -- (10,3);
            \draw (1,5) node{$\bullet$};
            \draw (3,7) node{$\bullet$};
            \draw (5,9) node{$\bullet$};
            \draw (7,1) node{$\bullet$};
            \draw (9,3) node{$\bullet$};
            \draw (1, 7) node{0};
            \draw (3, 9) node{0};
            \draw (7, 3) node{3};
        \end{tikzpicture}
        
        \end{figure}
        
        The Knutson--Miller Gr\"obner basis is equal to $\G_w$ in this case, consisting of the $4\times 4$ minor $g = \det M^{[4, 4]}$ along with the three $1\times 1$ minors $x_{11}$, $x_{12}$, $x_{21}$. The reduced Gr\"obner basis $\G'_w$ is obtained by removing all terms of $g$ containing these three variables, leaving behind a degree-$4$ generator with $8=2^{4-1}$ terms.
    \end{example}

    Proposition~\ref{prop:extreme} below generalizes Example~\ref{ex:extreme} (where $n=4$), proving that the lower bound on the number of terms in elements of $\G'_w$ given by Theorem~\ref{thm:main} is sharp. Our proof of Theorem~\ref{thm:main} relies on reduction to this special case.

    \begin{prop}\label{prop:extreme}
        Let $w = (n-2)(n-3)\dots(2)(1)(n+1)(n)$ for some $n\geq 3$. Then the Gao--Yong Gr\"obner basis for $I_w$ is $\G_w = \{x_{ij}\}_{i+j\leq n-1}\cup\{g\}$, where $g = \det{M^{[n, n]}}$. The reduced Gr\"obner basis is $\G'_w = \{x_{ij}\}_{i+j\leq n-1}\cup\{g'\}$, where $g'$ has degree $n$ and $2^{n-1}$ terms.
    \end{prop}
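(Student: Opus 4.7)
The plan is to carry out four steps: determine the Rothe diagram and essential set of $w$, identify $\G_w$, reduce to $\G'_w$, and enumerate the surviving monomials of $g'$.

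First, a direct calculation yields $D(w) = \{(i,j) : i+j \leq n-1\} \cup \{(n,n)\}$, and the essential set consists of the southeast corners of the staircase (the positions $(i,j)$ with $i+j = n-1$, all of rank $0$) together with $(n,n)$ of rank $n-1$, since the $n-1$ ones in rows $1,\dots,n-1$ of $M_w$ all lie in $M^{[n,n]}$. Fulton's generating set from Theorem~\ref{thm:schubref}, after deduplication, is therefore $\{x_{ij}\}_{i+j \leq n-1} \cup \{g\}$ with $g = \det M^{[n,n]}$. This set is already minimal: every variable is irredundant because every other generator has degree $1$ or $n$, and $g$ is not in the variable ideal since its antidiagonal term $\pm \prod_{i=1}^n x_{i,n+1-i}$ involves only factors in positions $(i,n+1-i)$ with $i+(n+1-i) = n+1 > n-1$. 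Hence Gao--Yong's refinement (Theorem~\ref{thm:schubertgrobner}) does not shrink this set, and $\G_w = \{x_{ij}\}_{i+j \leq n-1} \cup \{g\}$.

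Second, I apply the reduction algorithm of Theorem~\ref{thm:grobref2}(2). The variables are already irreducible, and reducing $g$ modulo $\{x_{ij}\}_{i+j \leq n-1}$ cancels exactly those terms $\pm\prod_i x_{i,\sigma(i)}$ of the determinant expansion containing some factor with $i + \sigma(i) \leq n-1$. The surviving monomials constituting $g'$ are indexed by permutations $\sigma \in S_n$ satisfying $\sigma(i) + i \geq n$ for every $i \in [n]$.

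The main obstacle is counting these surviving permutations. Substituting $\tau(i) = n+1 - \sigma(i)$ reformulates the constraint as $\tau(i) \leq i+1$. Building $\tau$ from left to right, at each position $k < n$ no prior $\tau(j)$ can equal $k+1$ (since $\tau(j) \leq j+1 \leq k$), so the set $[k+1] \setminus \{\tau(1), \dots, \tau(k-1)\}$ always has exactly two elements; $\tau(n)$ is then forced by the last unused value. This yields $2^{n-1}$ permutations, so $g'$ is a signed sum of $2^{n-1}$ monomials of degree $n$.
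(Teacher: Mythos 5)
Your proposal is correct and follows essentially the same route as the paper's proof: identify $\G_w$ as the staircase variables together with $g=\det M^{[n,n]}$, observe that reduction deletes exactly the determinant terms meeting the staircase, and count the surviving permutations by noting there are two admissible choices at each step but the last. Your extra details (the explicit computation of $D(w)$ and $E(w)$, the minimality check, and the substitution $\tau(i)=n+1-\sigma(i)$) only flesh out steps the paper treats as immediate, and you have correctly worked with the intended permutation $(n-1)(n-2)\cdots 1\,(n+1)\,n$ that matches the claimed form of $\G_w$.
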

    \begin{proof}
        As in Example~\ref{ex:extreme}, it follows immediately from definitions that the Gao--Yong Gr\"obner basis $\G_w$ has the claimed form in this case. Dividing $g$ by $\G_w\setminus\{g\}$ leaves a remainder $g'$ consisting of all terms in $g$ not containing any $x_{ij}$ with $i+j\leq n-1$. By the reduction algorithm of Theorem~\ref{thm:grobref2}(2) it follows that $\G'_w = (\G_w\setminus\{g\})\cup\{g'\}$. It remains only to show that $g'$ contains $2^{n-1}$ terms. Since $g$ is the determinant of an $n\times n$ matrix, terms of $g$ are in bijection with permutations in $S_n$. Permutations $v\in S_n$ corresponding to terms avoiding $\{x_{ij}\}_{i+j\leq n-1}$ are those satisfying $v(i)\geq n-i$ for all $i$. Constructing these permutations by iteratively choosing the value of $v(i)$ with two options at each step except the last shows that there are precisely $2^{n-1}$ terms in $g'$ as claimed.
    \end{proof}

    In order to reduce the proof of Theorem~\ref{thm:main} to Proposition~\ref{prop:extreme}, we employ three lemmas which establish properties of elusive minors and their positioning relative to each other. For all of these lemmas, let $m_{I, J}$ be an elusive minor of rank $d$ in $\G_w$ with row indices $I = \{i_1,\dots,i_d\}$ and column indices $J = \{j_1,\dots,j_d\}$.

    \begin{lemma}\label{lemma:elusive1}
        Let $m_{I, J}$ be as above. If $(i_a, j)$ lies in the Rothe diagram $D(w)$ for some $1\leq a < d$ and $j\geq j_d$, then $r_w(i_a, j)\geq a$. Similarly, if $(i, j_b)$ lies in $D(w)$ for some $1\leq b < d$ and $i\geq i_d$, then $r_w(i, j_b)\geq b$.
    \end{lemma}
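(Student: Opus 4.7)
The plan is to prove the first assertion by contradiction; the second follows by the symmetric argument that swaps the roles of rows and columns (equivalently, applies the first statement to $w\inv$, whose rank function, Rothe diagram, essential set, and elusive minors are the transposes of those of $w$). So suppose that for some $1 \leq a < d$ there exists $(i_a, j) \in D(w)$ with $j \geq j_d$ but $r_w(i_a, j) < a$.

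The technical core is the observation that $r_w$ is constant along each east and south step taken within $D(w)$: if $(i, j+1) \in D(w)$, then by definition the $1$ in column $j+1$ lies strictly south of row $i$, so extending the NW $i \times j$ submatrix to the NW $i \times (j+1)$ submatrix introduces no new $1$, giving $r_w(i, j+1) = r_w(i, j)$; the analogous identity holds for south moves. Starting from $(i_a, j)$ and greedily moving east or south while remaining in $D(w)$---at every position not in $E(w)$ at least one such move is available by the very definition of the essential set---the walk terminates in finitely many steps (each move strictly increases $i+j$) at some $(i', j') \in E(w)$ with $i' \geq i_a$, $j' \geq j \geq j_d$, and $r_w(i', j') = r_w(i_a, j) < a$.

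It remains to derive the contradiction. Let $(i, j) \in E(w)$ be the essential-set element to which $m_{I, J}$ belongs, so that $r_w(i, j) = d - 1$. Since $i' \geq i_a$, the indices $i_1 < \cdots < i_a$ all lie in $[i']$, giving $|I \cap [i']| \geq a > r_w(i', j')$. Since $j' \geq j_d$, all of $J$ lies in $[j']$, so $|J \cap [j']| = d = r(i, j) + 1$. The first case of the definition of \emph{attends} is therefore satisfied, so $m_{I, J}$ attends $M^{[i', j']}$. But $r_w(i', j') < a \leq d - 1 = r(i, j)$, contradicting the elusiveness of $m_{I, J}$.

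The main obstacle is the rank-invariance and termination step for the greedy east/south walk on $D(w)$; once this is in hand, everything else is a direct bookkeeping check against the definitions of \emph{belongs}, \emph{attends}, and \emph{elusive}.
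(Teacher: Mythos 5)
Your proof is correct and follows the same route as the paper: show that $r_w(i_a,j)<a$ would force $m_{I,J}$ to attend $M^{[i_a,j]}$, then push southeast to an essential-set element to contradict elusiveness. The paper simply asserts that a minor attending an element of $D(w)$ attends some element of $E(w)$; your greedy east/south walk with the rank-invariance observation is exactly the justification of that step, so your write-up is a fully detailed version of the paper's one-paragraph argument.
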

    \begin{proof}
        This is immediate from the definition of an elusive minor. If $r(i_a, j)<a$ for some $(i_a, j)\in D(w)$ with $1\leq a\leq d$ and $j\geq j_d$, then $m_{I, J}$ attends $M^{[i_a, j]}$. Any minor that attends an element of $D(w)$ attends some element of $E(w)$, so $m_{I, J}$ is not elusive.
    \end{proof}

    \begin{lemma}\label{lemma:elusive2}
        Let $m_{I, J}$ be as above. Then $(i_d, j_d)$ lies in $D(w)$.
    \end{lemma}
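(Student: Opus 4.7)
The plan is to argue by contradiction: assume $(i_d, j_d) \notin D(w)$ and deduce that $m_{I,J}$ attends some essential box of rank strictly less than $d-1$, contradicting elusiveness. The base case $d=1$ is immediate: then $m_{I,J} = x_{i_1,j_1}$ belongs to some $(i^*, j^*) \in E(w)$ of rank zero, forcing $w(i) > j^*$ for all $i \leq i^*$ and $w^{-1}(j) > i^*$ for all $j \leq j^*$, whence $(i_1, j_1) \in D(w)$.

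For $d \geq 2$, the Rothe diagram definition forces $w(i_d) \leq j_d$ or $w^{-1}(j_d) \leq i_d$. These two cases are exchanged by the involution $w \leftrightarrow w^{-1}$, which transposes matrices, swaps row and column indices in every minor, rank function, essential set, and attending condition, and sends $I_w$ to $I_{w^{-1}}$ while preserving the class of elusive minors. It therefore suffices to handle the column case $w^{-1}(j_d) \leq i_d$.

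Let $k = w^{-1}(j_d) \leq i_d$; the $1$ of $M_w$ at $(k, j_d)$ is counted by $r_w(i_d, j_d)$ but not by $r_w(i_d, j_d - 1)$, so $r_w(i_d, j_d - 1) = r_w(i_d, j_d) - 1$. Since $m_{I,J}$ belongs to $(i^*, j^*) \in E(w)$ we have $i_d \leq i^*$, $j_d \leq j^*$, and monotonicity gives $r_w(i_d, j_d) \leq r_w(i^*, j^*) = d - 1$, so
\[
r_w(i_d, j_d - 1) \leq d - 2.
\]
This is enough to check the column case of the attending definition at $(i_d, j_d - 1)$: $|I \cap [i_d]| = d$ and $|J \cap [j_d - 1]| = d - 1 > d - 2 \geq r_w(i_d, j_d - 1)$, so $m_{I,J}$ attends $M^{[i_d, j_d - 1]}$.

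The main obstacle is to upgrade this to attendance at an honest essential box. The plan has two movements. First, if $(i_d, j_d - 1)$ fails to lie in $D(w)$, walk from it leftward or downward, tracking the strict gap $|J \cap [j']| - r_w(i', j')$ and choosing the direction so that this gap remains positive (a downward step preserves or increases the gap, while a leftward step decreases it by at most one and only when a new $1$ of $M_w$ is uncovered), until landing on a position of $D(w)$. Second, once inside $D(w)$, apply the slide from the proof of Lemma~\ref{lemma:elusive1}: move down-right within the connected component of $D(w)$ to a southeast corner, which lies in $E(w)$. Along this slide $r_w$ stays constant, $i_d \leq i'$ is preserved, and $|J \cap [j']|$ is weakly increasing, so the column-case attendance conditions survive. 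The resulting essential box $(i^\dagger, j^\dagger) \in E(w)$ satisfies $r_w(i^\dagger, j^\dagger) \leq d - 2 < d - 1$ and is attended by $m_{I,J}$, contradicting elusiveness and completing the proof.
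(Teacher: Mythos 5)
Your opening moves are sound: the $d=1$ base case, the transpose symmetry $w\leftrightarrow w^{-1}$ reducing to the case $w^{-1}(j_d)\le i_d$, and the computation showing $r_w(i_d,j_d-1)\le d-2$, hence that $m_{I,J}$ attends $M^{[i_d,j_d-1]}$, are all correct. The genuine gap is the final ``walk.'' Both monotonicity claims governing it are backwards. Writing $g(i',j')=|J\cap[j']|-r_w(i',j')$: a downward step fixes $|J\cap[j']|$ while $r_w$ weakly increases, so the gap weakly \emph{decreases}, not increases; and a leftward step out of column $j'$ decreases the gap by one exactly when $j'\in J$ and no $1$ of $M_w$ sits in column $j'$ weakly above row $i'$, while \emph{uncovering} such a $1$ makes $r_w$ drop and the gap go \emph{up}. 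With the correct signs, the assertion that one can always ``choose the direction so that the gap remains positive'' is unjustified: if the gap equals $1$, $j'\in J$, $w^{-1}(j')>i'$, and $w(i'+1)\le j'$, then both available steps kill the gap, and you have not excluded this configuration. You also never argue that the walk terminates at a point of $D(w)$ rather than exhausting the board. So the crucial reduction --- from attendance at the arbitrary position $(i_d,j_d-1)$ to attendance at an essential box of rank $<d-1$ --- is not established. The fact invoked in Lemma~\ref{lemma:elusive1}, that a minor attending a box of $D(w)$ attends a box of $E(w)$, applies only to positions already in $D(w)$, which is precisely what $(i_d,j_d-1)$ may fail to be.

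For contrast, the paper's proof is direct rather than by contradiction and never leaves $D(w)$: it starts at the essential box $(i,j)$ to which $m_{I,J}$ belongs, where $w(i)>j$ and $r(i,j)=d-1$ are known, uses a counting argument on the row of positions $(i,j_1),\dots,(i,j_d)$ together with Lemma~\ref{lemma:elusive1} to force $(i,j_d)\in D(w)$, obtains $(i_d,j)\in D(w)$ symmetrically, and then intersects the two conditions to place $(i_d,j_d)$ in $D(w)$. You could likely repair your argument by replacing the walk with that counting step, but as written the proof has a hole.
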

    \begin{proof}
        Let $m_{I, J}$ belong to some element $(i, j)$ of the essential set $E(w)$, so $i\geq i_d$ and $j\geq j_d$. Since $(i, j)$ lies in $E(w)$, we know that $w(i) > j$, and since $m_{I, J}$ is a $d\times d$ minor belonging to $(i, j)$ we also know that $r(i, j) = d-1$. Consider the sequence of elements $(i, j_a)$ for $1\leq a\leq d$. Since $w(i) > j$, each element $(i, j_a)$ lies in $D(w)$ unless $w(k) = j_a$ for some $k<i$. The definition of the rank function implies that there cannot be more than $d-1$ values of $k$ satisfying $w(k)\leq j_d$, so at least one element $(i, j_a)$ must lie in $D(w)$.
        
        Suppose $(i, j_c)$ lies in $D(w)$. If $c$ is strictly less than $d$, then by Lemma~\ref{lemma:elusive1} we know that $r(i, j_c)\geq c$. Thus we must have $w(k) \leq j_c$ for at least $c$ values of $k$ strictly less than $i$. This leaves at most $d-1-c$ values of $k$ satisfying $j_c < w(k) \leq j_d$. Thus at least one of the $(c-d)$ elements $(i, j_{c+1})$,\dots,$(i, j_d)$ lies in $D(w)$. Iterating this argument shows that $(i, j_d)$ lies in $D(w)$, and we see analogously that $(i_d, j)\in D(w)$. Thus $(i_d, j_d)$ lies in $D(w)$ as claimed.
    \end{proof}

    \begin{figure}[h]
        \begin{center}
        \begin{tikzpicture}[scale = 0.37]
            \draw[black, thick] (0, 0) -- (12, 0) -- (12, 12) -- (0, 12) -- (0, 0);
            \draw[black, thick] (4, 4) -- (6, 4) -- (6, 6) -- (4, 6) -- (4, 4);
            \draw[black, thick] (10, 0) -- (12, 0) -- (12, 2) -- (10, 2) -- (10, 0);
            \draw[black, thick] (4, 0) -- (6, 0) -- (6, 2) -- (4, 2) -- (4, 0);
            \draw[black, thick] (10, 4) -- (12, 4) -- (12, 6) -- (10, 6) -- (10, 4);
            \draw[black, dotted] (4, 0) -- (4, 12);
            \draw[black, dotted] (6, 0) -- (6, 12);
            \draw[black, dotted] (0, 4) -- (12, 4);
            \draw[black, dotted] (0, 6) -- (12, 6);
            \draw (11, 1) node{\scriptsize{$d-1$}};
            \draw (5, 5) node{\scriptsize{$k-1$}};
            \draw (5, 1) node{$\geq b$};
            \draw (11, 5) node{$\geq a$};
        \end{tikzpicture}
        \end{center}
        \caption{The situation of Lemma~\ref{lemma:reduction}. A value of $r$ in position $(i, j)$ means that there are $r$ points $(x, w(x))$ with $x\leq i$, $w(x)\leq j$.}
        \label{fig:rankconditions}
    \end{figure}
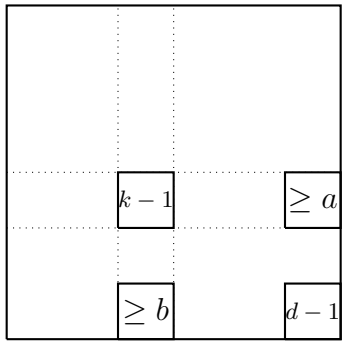
    
    \begin{lemma}\label{lemma:reduction}
        Let $m_{I, J}$ be as above and let $m_{I', J'}$ ($I'\subset I$, $J'\subset J$) be a $k\times k$ sub-minor ($k<d$) that is also a generator of $I_w$. Then the main antidiagonal of $m_{I', J'}$ is weakly northwest of the $(d-2)$th antidiagonal of $m_{I, J}$, where the first antidiagonal is the northwest corner.
    \end{lemma}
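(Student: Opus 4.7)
The plan is to analyze the rank function of $w$ on the rectangle $[i^*, i] \times [j^*, j]$, where $(i, j) \in E(w)$ and $(i^*, j^*) \in E(w)$ are the essential-set elements to which $m_{I, J}$ and $m_{I', J'}$ belong, so that $r(i,j) = d - 1$ and $r(i^*, j^*) = k - 1$. I first exploit the elusiveness of $m_{I, J}$: since $r(i^*, j^*) < d - 1$, $m_{I, J}$ cannot attend $M^{[i^*, j^*]}$. Because $I' \subseteq I \cap [i^*]$ and $J' \subseteq J \cap [j^*]$ force $|I \cap [i^*]| \geq k$ and $|J \cap [j^*]| \geq k$, the definition of attending can only fail if $|I \cap [i^*]| < d$ and $|J \cap [j^*]| < d$. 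This forces $i^* < i_d$ and $j^* < j_d$; in particular every $s_c, t_c$ is at most $d - 1$, and the claim $s_a + t_b \leq d - 1$ is at least consistent with the setup.

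Next, for each pair $(a, b)$ with $a + b = k + 1$, I aim to establish the rank bounds
\[
r(i^*, j) \geq s_a + b - 1 \quad \text{and} \quad r(i, j^*) \geq t_b + a - 1.
\]
Both come from combining two applications of Lemma~\ref{lemma:elusive1}, with Lemma~\ref{lemma:elusive2} ensuring the relevant Rothe-diagram memberships. For the first bound, Lemma~\ref{lemma:elusive1} applied to $m_{I, J}$ at $(i_{s_a}, j_d)$ yields $r(i_{s_a}, j_d) \geq s_a$, and monotonicity together with $i_{s_a} \leq i^*$ and $j_d \leq j$ gives $r(i^*, j) \geq s_a$. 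Applying Lemma~\ref{lemma:elusive1} (column version) to $m_{I', J'}$ at $(i_{s_k}, j_{t_b})$ then gives $r(i_{s_k}, j_{t_b}) \geq b$; a careful accounting shows these $b$ ones, which all lie in $[i^*] \times [j^*]$, contribute an extra $b - 1$ beyond the $s_a$ estimate, yielding the claimed bound. The companion bound on $r(i, j^*)$ follows by a symmetric argument with the roles of rows and columns reversed.

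Finally, the proof concludes via the rank inclusion-exclusion identity
\[
r(i, j) + r(i^*, j^*) \;=\; r(i^*, j) + r(i, j^*) + \#\bigl\{\mathbf{1}\text{s in }(i^*, i] \times (j^*, j]\bigr\}.
\]
Substituting $r(i,j) = d-1$ and $r(i^*, j^*) = k - 1$, using the bounds above, and invoking nonnegativity of the southeast block count yields $(d-1)+(k-1) \geq (s_a+b-1)+(t_b+a-1)$. Simplifying using $a + b = k + 1$ collapses this to $s_a + t_b \leq d - 1$, as required.

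The main obstacle I anticipate is the pair of rank lower bounds: separating the $s_a$ and $b - 1$ contributions without overcounting 1s in the overlap region $[i_{s_k}] \times [j_{t_b}]$, and handling the edge cases $s_a = d$ or $b = k$ where Lemma~\ref{lemma:elusive1} does not apply directly. I expect these to reduce to the main argument after invoking Lemma~\ref{lemma:elusive2} to pin down the boundary Rothe-diagram positions.
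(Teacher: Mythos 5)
Your overall architecture matches the paper's: establish rank lower bounds at the two ``anti-corners'' of a rectangle and then apply submodularity (inclusion--exclusion) of the rank function. Your reduction of the claim to $s_a+t_b\leq d-1$ for antidiagonal pairs $a+b=k+1$ is correct, your final arithmetic is correct, and the observation that non-attendance of $M^{[i^*,j^*]}$ forces $i^*<i_d$ and $j^*<j_d$ (so that the inclusion--exclusion rectangle is genuinely oriented northwest-to-southeast) is a nice touch the paper does not need, since it anchors its rectangle at the minor corners $(i_a,j_b)$ and $(i_d,j_d)$ rather than at essential-set elements.

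However, the central step --- the rank lower bounds --- is where the real work lies, and your proposed derivation of them does not go through. First, the application of Lemma~\ref{lemma:elusive1} at $(i_{s_a},j_d)$ requires $(i_{s_a},j_d)\in D(w)$, which is not guaranteed; this is not an edge case but the technical heart of the argument. The paper handles it by taking the largest $c\leq s_a$ with $(i_c,j_d)\in D(w)$, applying Lemma~\ref{lemma:elusive1} there, and then using Lemma~\ref{lemma:elusive2} (which puts $(i_d,j_d)\in D(w)$) to force $w(i_{c'})<j_d$ for every intermediate row $c<c'\leq s_a$ not in the diagram, recovering the full count $r(i_{s_a},j_d)\geq s_a$. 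Second, the ``extra $b-1$'' from applying Lemma~\ref{lemma:elusive1} to $m_{I',J'}$ is doubly flawed: that lemma is stated for \emph{elusive} minors, while $m_{I',J'}$ is only assumed to be a generator of $I_w$; and even granting it, the $b$ ones it locates in $[i_{s_k}]\times[j_{t_b}]$ may all coincide with ones already counted among the $s_a$ in $[i_{s_a}]\times[j_d]$, so no accounting of the union yields $s_a+b-1$. The repair is to drop the second minor from this step entirely: prove only $r(i_{s_k},j_d)\geq s_k$ and $r(i_d,j_{t_k})\geq t_k$ by the iteration above, and then deduce $s_a+b-1\leq s_k$ from the strict monotonicity $s_1<\dots<s_k$ together with $a+b=k+1$ (and symmetrically for $t_b+a-1\leq t_k$). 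With that substitution your framework closes, and is essentially the paper's proof restated at the level of individual antidiagonal entries rather than via the paper's reduction to the most southeastern choice of $I'$ and $J'$.
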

    \begin{proof}
        Let the southeast corner of $m_{I', J'}$ be $(i_a, j_b)$ for some $a$ and $b$ between $k$ and $d$. Without loss of generality we may assume that $I' = \{i_{a-k+1}, i_{a-k+2},\dots,i_a\}$ and $J' = \{j_{b-k+1}, j_{b-k+2},\dots,j_b\}$, since among all $k\times k$ sub-minors of $m_{I, J}$ with southeast corner $(i_a, j_b)$ this one has the most southeastern main antidiagonal. Writing down coordinates for points on the antidiagonals of $m_{I, J}$ and $m_{I', J'}$ reduces the proof to showing that $a+b\leq d+k-2$.

        Since $m_{I, J}$ has rank $d$ we know that $r(i_d, j_d) = d-1$. We claim that in addition $r(i_a, j_d)\geq a$. Let $c\in [a]$ be the greatest number such that $(i_c, j_d)$ lies in $D(w)$ (or take $c = 0$ if no such $c$ exists). Then by Lemma~\ref{lemma:elusive1} $r(i_c, j_d) \geq c$. Furthermore, since $(i_{c'}, j_d)\notin D(w)$ for the $(a-c)$ integers $c'$ satisfying $c<c'\leq a$ and $(i_d, j_d)\in D(w)$ by Lemma~\ref{lemma:elusive2}, we must have $w(i_{c'}) < j_d$ for all such $c'$. It follows that $r(i_a, j_d) \geq c+(a-c) = a$, since the rank function counts the number of $x\in [i_a]$ such that $w(x) < j_d$. The same argument shows that $r(i_d, j_b)\geq b$.

        We now know that $r(i_a, j_b) = k-1$, $r(i_d, j_d) = d-1$, $r(i_a, j_d) \geq a$, and $r(i_d, j_b)\geq b$. These rank conditions express the locations of points $(x, w(x))$ in the permutation matrix $M_w$. It follows that $r(i_a, j_d)+r(i_d, j_b)$ is bounded above by $r(i_a, j_b)+r(i_d, j_d)$ (see Figure \ref{fig:rankconditions}). Thus
        \[a+b\leq r(i_a, j_d) + r(i_d, j_b)\leq r(i_a, j_b) + r(i_d, j_b) = d+k-2.\qedhere\]
    \end{proof}
    
    \begin{proof}[Proof of Theorem~\ref{thm:main}]
        By Theorem~\ref{thm:schubertgrobner} and Theorem~\ref{thm:grobref2}(2), the elements of $\G'_w$ are the remainders obtained by dividing elusive minors $m_{I, J}$ in $\G_w$ by each other. Suppose that some term of $m_{I, J}$ is divisible by the lead term (\textit{i.e.}, the antidiagonal term) of another elusive minor $m_{I', J'}$. Then in particular that term is divisible by each variable in the main antidiagonal of $m_{I', J'}$. By Lemma~\ref{lemma:reduction} any variable $x_{ij}$ in the main antidiagonal of an elusive sub-minor $m_{I', J'}$ lies on or above the $(d-2)$th antidiagonal of $m_{I, J}$. We have therefore reduced the problem to showing that the determinant of a generic $d\times d$ matrix contains $2^{d-1}$ terms that avoid the variables $x_{i, j}$ with $i+j\leq d-1$. This is the case of Proposition~\ref{prop:extreme}.
    \end{proof}

\section{Vexillary $I_w$}\label{section4}

    In this paper we do not attempt to describe the reduced Gr\"obner basis $\G'_w$ explicitly. However, we can characterize the permutations $w$ such that the the minimal Gr\"obner basis $\G_w$ is already reduced. This happens precisely when $w$ is a \textit{vexillary} ($2143$-avoiding) permutation. Our proof uses the equivalent characterization of vexillary permutations as those $w$ such that all elements of the essential set $E(w)$ can be ordered into a list $\{e_1,e_2,\dots,e_n\}$ with each $e_i$ weakly southwest of $e_{i+1}$. This characterization appears as Remark 9.17 in \cite{Fulton}.
    
    \begin{proof}[Proof of Theorem~\ref{thm:vexillary}]
        Suppose first that $w$ is vexillary and let $g\in\G_w$ be an elusive minor belonging to some $e\in E(w)$. If $LT(g')$ divides some term of $g$, then $g'$ must belong to an $e'\in E(w)$ strictly northwest of the southeast corner of $g$. But then $e'$ is strictly northwest of $e$ since the southeast corner of $g$ is (weakly) northwest of $e$, so $w$ cannot be vexillary. It follows that no generators divide any terms of $g$, so this minimal Gr\"obner basis is in fact reduced.
        
        Conversely, suppose that $w$ is not vexillary, so there exist elements $e = (i, j)$ and $e' = (i', j')$ such that $e'$ is strictly southeast of $e$ (\textit{i.e.}, $i < i'$ and $j < j'$). After fixing $e$, we may choose $e'$ among the elements of $E(w)$ with this property to minimize the difference $k$ between $r' = r(i', j')$ and $r = r(i, j)$. Let $m_{I, J}$ be the maximally southeastern minor belonging to $e$ and let $m_{I', J'}$ be the maximally southeastern minor belonging to $e'$ and containing $m_{I, J}$ as a subminor. Then $m_{I, J}$ is an elusive minor belonging to $e$ by Claim 2.2 of \cite{GaoYong}, so to complete the proof it suffices to show that $m_{I', J'}$ is also elusive.
        
        If $m_{I', J'}$ is the maximally southeastern minor belonging to $e'$ then $m_{I',J'}$ is elusive by Claim 2.2 of \cite{GaoYong}. We therefore reduce to the case where
        $$I' = I\cup \{i'-k+1,i'-k+2,\dots,i'\}\textrm{ and } J' = J\cup \{j'-k+1,j'-k+2,\dots,j'\}.$$ Suppose that $m_{I', J'}$ attends $M^{[a, b]}$ for some $(a, b)\in E(w)$. Then $r(a, b)<r'$, so $r(a,b)-r<r'-r$, which contradicts our choice of $e'$ if $(a, b)$ is strictly southeast of $e$. But if $(a, b)$ is not strictly southeast of $e$, then by $m_{I', J'}$ attends $(a, b)$ if and only if $m_{I, J}$ does and we also obtain a contradiction.
    \end{proof}
    
\section{Binomial Schubert determinantal ideals}\label{section5}
    In order to establish Theorem~\ref{thm:binomial}, we first prove a pattern-avoidance characterization for permutations whose rank functions $r_w$ take values strictly less than $k$ on the essential set $E(w)$. We only need the special case $k=2$ of this proposition. However, the general proof requires no additional effort and may be of independent interest, so we provide it anyway.
    
    \begin{prop}\label{prop:patterns}
        The rank function $r_w$ satisfies $r_w(i, j) < k$ for all $(i, j)\in E(w)$ if and only if $w$ avoids the $k!$ patterns $\{v(k+2)(k+1)|v\in S_k\}$ in $S_{k+2}$. In particular, $r_w$ evaluates to $0$ or $1$ on each element of $E(w)$ if and only if $w$ avoids the patterns $1243$ and $2143$.
    \end{prop}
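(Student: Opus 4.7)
Proof plan: I would prove the contrapositive in both directions, namely that there exists $(i,j)\in E(w)$ with $r_w(i,j)\geq k$ if and only if $w$ contains at least one pattern from the set $\{v(k+2)(k+1)\mid v\in S_k\}$. Both directions revolve around the same explicit bijection between such ``witness'' positions in $E(w)$ and pattern occurrences.

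$(\Leftarrow)$ Suppose indices $i_1<\dots<i_{k+2}$ realize such a pattern. I set $(i,j)=(i_{k+1},w(i_{k+2}))$. Because $w(i_{k+1})$ is the largest and $w(i_{k+2})$ the second largest of the $k+2$ pattern values, one checks directly that $w(i)>j$ and $w^{-1}(j)=i_{k+2}>i$, so $(i,j)\in D(w)$. The indices $i_1,\dots,i_k$ lie weakly northwest of $(i,j)$ with $w(i_s)<j$, so $r_w(i,j)\geq k$. Using that $r_w$ is weakly increasing in each coordinate, and that from any point of $D(w)$ one can iteratively move south or east within $D(w)$ to land on a southeast corner (an element of $E(w)$), I get some $(i',j')\in E(w)$ with $r_w(i',j')\geq r_w(i,j)\geq k$.

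$(\Rightarrow)$ Suppose $(i,j)\in E(w)$ satisfies $r_w(i,j)\geq k$. Since $(i,j)\in D(w)$, we have $w(i)>j$ and $w^{-1}(j)>i$. I pick indices $i_1<\dots<i_k\leq i$ with $w(i_s)\leq j$ witnessing the rank bound. The condition $w^{-1}(j)>i$ upgrades $w(i_s)\leq j$ to $w(i_s)<j$, and together with $w(i)>j$ this gives $i_k<i$. Setting $i_{k+1}=i$ and $i_{k+2}=w^{-1}(j)$, the values $w(i_1),\dots,w(i_k),w(i_{k+1}),w(i_{k+2})$ consist of $k$ values strictly below $j$, followed by $w(i)>j$, followed by $j$ itself. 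So $w(i_{k+1})$ is the maximum and $w(i_{k+2})$ the second maximum of the $k+2$ values, exhibiting a pattern $v(k+2)(k+1)$, where $v\in S_k$ is induced by $w(i_1),\dots,w(i_k)$.

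The only step requiring any non-formal argument is the ``slide to a southeast corner'' in $(\Leftarrow)$, and this is immediate from the definition of $E(w)$: if $(i,j)\in D(w)\setminus E(w)$, then $(i+1,j)$ or $(i,j+1)$ is again in $D(w)$, and the process terminates in a finite grid. The proposition's last sentence is then the case $k=2$: the two elements $12,21\in S_2$ produce exactly the patterns $1243$ and $2143$.
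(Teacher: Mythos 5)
Your proposal is correct and follows essentially the same argument as the paper: both directions use the identical witness $(i_{k+1},w(i_{k+2}))$ and the same reconstruction of the pattern from a high-rank essential box. The only cosmetic difference is in passing from a diagram box to an essential box: the paper invokes constancy of $r_w$ on the connected component of $D(w)$, while you slide south/east using monotonicity of $r_w$, which is an equally valid (and arguably more self-contained) justification of the same step.
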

    \begin{proof}
        We prove the contrapositive. Suppose $w$ contains a pattern of the form $v(k+2)(k+1)$ for some $v\in S_k$ (written in one-line notation). Let $a_1 < a_2 < \dots < a_{k+2}$ witness this pattern, so $w(a_{k+2}) > w(a_i)$ for all $i\leq k$ and $w(a_{k+1})>w(a_{k+2})$. Let $b_i = w(a_i)$ for $1\leq i\leq k+2$. Then $d = (a_{k+1}, b_{k+2})$ lies in $D(w)$, as it is west of $(a_{k+1}, b_{k+1})$ and north of $(a_{k+2}, b_{k+2})$. Furthermore, since $d$ lies southeast of $(a_i, b_i)$ for each $i\leq k$, we know that $r_w(a_{k+1}, b_{k+2})\geq k$. Let $e = (i, j) \in E(w)$ be a southeast corner of the connected component of $D(w)$ containing $d$. Then $r_w(i,j) = r_w(a_{k+1}, b_{k+2})\geq k$.
        
        Conversely, suppose that for some $e = (i, j)$ in $E(w)$ we have $r_w(i, j)\geq k$. Since $r_w(i,j)\geq k$ there must exist points $(a_i, w(a_i))$ for $1\leq i\leq k$ such that $a_1<\dots<a_k<i$ and each $w(a_i)$ is less than $j$. Also, since $e$ lies in the Rothe diagram of $w$ it follows that $w(i) = b$ for some $b>j$ and $j = w(a)$ for some $a>i$. Thus we have points $(i, w(i))$ and $(a, w(a))$ such that $i<a$ and $w(i)>w(a)$. Putting these pieces together, we see that the sequence $(a_1,\dots,a_k, i, a)$ witnesses a pattern embedding of $v(k+2)(k+1)$ in $w$ for some $v\in S_k$.
    \end{proof}

    \begin{proof}[Proof of Theorem~\ref{thm:binomial}]
        By Proposition~\ref{prop:patterns}, the permutation $w$ avoids the patterns $1243$ and $2143$ if and only if $r_w(i, j)$ evaluates to $0$ or $1$ on every element of the essential set $E(w)$. This happens if and only if the Fulton generators of $I_w$ from Theorem~\ref{thm:schubref} are all binomials by definition. By Theorem~\ref{thm:schubertgrobner} it follows that the Fulton generators of $I_w$ are all binomials if and only if the elusive minors of the Gao--Yong minimal Gr\"obner basis $\G_w$ are all binomials. It therefore suffices to show that $I_w$ is a binomial ideal if and only if the set $\G_w$ of elusive minors consists of binomials. One direction is immediate, so we need only prove that if $I_w$ is a binomial ideal then $\G_w$ consists of binomials.

        We know from Proposition~\ref{prop:binomial} that $I_w$ is a binomial ideal if and only if its reduced Gr\"obner basis $\G'_w$ consists of binomials. We therefore need to show that if $\G_w$ contains a $k\times k$ minor for $k\geq 3$, then $\G'_w$ contains a generator with $\geq 3$ terms. This follows immediately from Theorem~\ref{thm:main}: $k\times k$ minors are polynomials of degree $k$, so if $\G_w$ contains such a minor then the corresponding generator in $\G'_w$ has at least $2^{3-1} = 4$ terms.
    \end{proof}

\section{Regularity of binomial $I_w$}\label{section6}
    As our final application, we will demonstrate that two recently-proven formulas for the  regularity of different classes of ideals coincide for binomial $I_w$. On one side, Rajchgot--Robichaux--Weigandt provided a formula in \cite[Theorem 1.5]{SchubReg} for vexillary Schubert determinantal ideals, which includes the binomial $I_w$ as a special case by Theorem \ref{thm:binomial}. On the other side, Almousa--Dochtermann--Smith gave a formula for the regularity of \textit{toric edge ideals} of bipartite graphs in \cite[Corollary 6.7]{ToricReg}. Binomial $I_w$ can be realized as toric edge ideals as shown by Portakal in \cite[pg. 7]{ToricEdgeIdeals}. We begin by recalling the definition of regularity. A standard reference is \cite[Chapter 20]{Eisenbud}.
    
    View $R = \Bbbk[x_{ij}]_{1\leq i, j\leq n}$ as a graded ring in the standard way (so each $x_{ij}$ has degree 1), and let $R(-a)$ be $R$ with all degrees shifted by $a$ (so each $x_{ij}$ has degree $1+a$). For a homogeneous ideal $I\subseteq R$, a \textit{(graded) free resolution} of $R/I$ is an exact sequence of free graded $R$-modules in the following form:
    \begin{equation*}
        0\to \bigoplus_{j\in\Z}R(-j)^{b_{ij}}\xrightarrow{\partial_i}\dots\xrightarrow{\partial_1}\bigoplus_{j\in\Z} R(-j)^{b_{0j}}\xrightarrow{\partial_0} R/I\to 0.
    \end{equation*}
    
    Exactness means that $\ker(\partial_i) = \textrm{im}(\partial_{i+1})$ for all $i$. The maps $\partial_i$ may be written as matrices, and a free resolution is called \textit{minimal} if no units of $R$ appear in these matrices. Equivalently, a minimal free resolution simultaneously minimizes all the numbers $b_{ij}$. It turns out that $R/I$ has a minimal free resolution, which is unique up to isomorphism \cite[Theorem 20.2]{Eisenbud}. The constants $b_{ij}$ appearing in the minimal free resolution are denoted by $\beta_{ij}$ and called the \textit{graded Betti numbers} of $R/I$. The \textit{(Castelnuovo--Mumford) regularity} of $R/I$ is defined as
    \begin{equation*}
        \reg(I) = \max\{j-i|\beta_{ij}\neq 0\}.
    \end{equation*}

    The following result is a well-known general fact about minimal free resolutions. Once one recalls the standard notions from homological algebra (which we omit here) it follows immediately from the fact that tensoring over a field is an exact functor.
        \begin{prop}\label{prop:homalg}
        Let $I \subseteq R = \Bbbk[x_1,\dots, x_r]$ be an ideal and suppose a generating set $G = \{g_1,\dots, g_s\}$ for $I$ can be partitioned into subsets $G_1$ and $G_2$ such that $G_1\subset R_1 = \Bbbk[x_1,\dots,x_k]$ and $G_2\subset R_2 = \Bbbk[x_{k+1},\dots,x_r]$ for some $k$. Let $I_1$ and $I_2$ be the ideals generated by $G_1$ in $R_1$ and $G_2$ in $R_2$ respectively. Let $F^1_\bullet$ and $F^2_\bullet$ be minimal free resolutions of $R_1/I_1$ and $R_2/I_2$. Then the minimal free resolution of $R/I$ is given by the tensor product $(F^1\otimes_\Bbbk F^2)_\bullet$. In particular, this implies that
        \begin{equation*}
            \beta_{a,b}(I) = \sum_{i+i' = a}\sum_{j+j' = b}\beta_{i, j}(I_1)\beta_{i', j'}(I_2)
        \end{equation*}
        and
        \begin{equation*}
            \reg(I) = \reg(I_1)+\reg(I_2).
        \end{equation*}
    \end{prop}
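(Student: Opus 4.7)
The plan is to identify the minimal free resolution of $R/I$ with the tensor product over $\Bbbk$ of the minimal free resolutions of $R_1/I_1$ and $R_2/I_2$, and then read off the Betti-number and regularity formulas from its grading. As graded $\Bbbk$-algebras we have $R = R_1\otimes_\Bbbk R_2$, and the hypothesis that the generators $G = G_1\sqcup G_2$ split between $R_1$ and $R_2$ gives $I = I_1 R + I_2 R$, hence
\[
R/I \;\cong\; (R_1/I_1)\otimes_\Bbbk (R_2/I_2)
\]
as graded $R$-modules. I would then form the total complex of the double complex $F^1_\bullet\otimes_\Bbbk F^2_\bullet$. Each term $\bigoplus_{i+i'=a} F^1_i\otimes_\Bbbk F^2_{i'}$ is a graded free $R$-module under the identification $R_1(-j)\otimes_\Bbbk R_2(-j')\cong R(-j-j')$. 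Because $\Bbbk$ is a field, $\otimes_\Bbbk$ is exact, so the K\"unneth theorem applies and the homology of the total complex equals $H_\bullet(F^1_\bullet)\otimes_\Bbbk H_\bullet(F^2_\bullet)$, which is concentrated in degree $0$ and is isomorphic to $R/I$. Thus the total complex is a graded free $R$-resolution of $R/I$.

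The step I expect to require the most care is minimality, though the argument is short. By minimality of $F^1_\bullet$, every matrix entry of $\partial^1_i$ lies in the irrelevant maximal ideal $\mathfrak{m}_1\subseteq R_1$, and similarly for $\partial^2_j$. The differential of the total complex has the form $\partial^1\otimes 1 \pm 1\otimes\partial^2$, so its matrix entries lie in $\mathfrak{m}_1 R + \mathfrak{m}_2 R$, which is exactly the irrelevant maximal ideal of $R$. No units appear, so the tensor product is the minimal free resolution of $R/I$.

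With minimality in hand, the Betti-number formula is immediate: a copy of $R(-b)$ in homological degree $a$ of the tensor product complex arises exactly from a pair $R_1(-j)\subseteq F^1_i$ and $R_2(-j')\subseteq F^2_{i'}$ with $i+i'=a$ and $j+j'=b$, giving $\beta_{a,b}(I)=\sum_{i+i'=a,\,j+j'=b}\beta_{i,j}(I_1)\beta_{i',j'}(I_2)$. For regularity, any nonzero $\beta_{a,b}(I)$ forces some summand $\beta_{i,j}(I_1)\beta_{i',j'}(I_2)$ to be nonzero (since Betti numbers are nonnegative), so $b-a = (j-i)+(j'-i')\leq \reg(I_1)+\reg(I_2)$; conversely, combining Betti numbers that witness $\reg(I_1)$ and $\reg(I_2)$ produces a nonzero contribution to some $\beta_{a,b}(I)$ with $b-a=\reg(I_1)+\reg(I_2)$, establishing $\reg(I)=\reg(I_1)+\reg(I_2)$.
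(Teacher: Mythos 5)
Your proof is correct and follows exactly the route the paper indicates: the paper omits the details and simply notes that the result ``follows immediately from the fact that tensoring over a field is an exact functor,'' and your argument (identifying $R/I\cong(R_1/I_1)\otimes_\Bbbk(R_2/I_2)$, applying K\"unneth to the total complex, and checking minimality via the differential $\partial^1\otimes 1\pm 1\otimes\partial^2$ landing in the irrelevant maximal ideal) is the standard fleshed-out version of that one-line justification. Nothing to correct.
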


    \begin{definition}
        A permutation $v$ is \textit{dominant} if $r_v$ evaluates to $0$ on all of $E(v)$. When $v$ is dominant, its Rothe diagram consists of a single connected component $\lambda$, which we call its \textit{shape}. (This is a shape of a Young diagram.)
    \end{definition}

    \begin{definition}
        Let $w$ be a permutation and let $D_1,\dots, D_k$ be the connected components of $D(w)$. For $1\leq i\leq k$, let $v_i$ be a dominant permutation of shape $D_i$ and let $r_i$ be the value of the rank function on $D_i$. Let $u_i = 1^{r_i}\times v_i$ be the permutation such that $u_i(j)=j$ for $1\leq j\leq r_i$ and $u_i(j)=v_i(j)+r_i$ for $r_i+1\leq j$. The \textit{parts} of $I_w$ are the ideals $\{I_{u_i}\}_{i=1}^k$. The \textit{dominant part} of $I_w$ is $I_{u_i}$ for the unique $u_i$ that is dominant.
    \end{definition}

    In the case where $I = I_w$ is a binomial Schubert determinantal ideal, every generator of $I_w$ belonging to an element of $D_i$ can be viewed as a generator of the part $I_{u_i}$. Furthermore, no generators belonging to distinct $D_i$ share variables. This follows from the characterization of toric matrix Schubert varieties given by Escobar and M\'esz\'aros in \cite[Theorem 3.4]{ToricMatrixSchubs}. Combining this observation with Proposition~\ref{prop:homalg} and the fact that the dominant part of $I_w$ has regularity $0$ yields the following result:
    
    \begin{theorem}\label{thm:decomp}
        Let $I_w$ be a binomial Schubert determinantal ideal with dominant part $I_{u_0}$ and non-dominant parts $I_{u_1},\dots, I_{u_k}$. Then
        \begin{equation*}
            \reg(I_w) = \sum_{i=1}^k \reg(I_{u_i}).
        \end{equation*}
    \end{theorem}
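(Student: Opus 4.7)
The plan is to decompose $I_w$ according to the connected components of the Rothe diagram $D(w)$ and then apply Proposition~\ref{prop:homalg} to split the regularity computation across components. Let $D_0, D_1, \ldots, D_k$ be the connected components of $D(w)$, indexed so that $D_0$ corresponds to the dominant part $I_{u_0}$. By Theorem~\ref{thm:schubertgrobner}, $I_w$ is generated by the set $\G_w$ of elusive minors, each associated to an essential-set element and hence to a unique component $D_\ell$. Partition $\G_w$ accordingly as $\G_w = \G^0 \sqcup \G^1 \sqcup \cdots \sqcup \G^k$.

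The first key step is to verify that in the binomial case the generators in $\G^\ell$ and $\G^{\ell'}$ for $\ell \neq \ell'$ involve disjoint sets of variables. By Theorem~\ref{thm:binomial}, $w$ avoids $1243$ and $2143$, and by Proposition~\ref{prop:patterns} the rank function $r_w$ then takes values only in $\{0,1\}$ on $E(w)$. Hence each elusive minor is either a single variable $x_{ij}$ (when $r_w(i,j)=0$) or a $2 \times 2$ binomial minor (when $r_w(i,j)=1$). The disjoint-variables assertion then follows from the Escobar--M\'esz\'aros description of toric matrix Schubert varieties \cite[Theorem 3.4]{ToricMatrixSchubs}, which pins down the combinatorial shape of $D(w)$ precisely when $I_w$ is binomial.

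Iteratively applying Proposition~\ref{prop:homalg} to this partition yields
\begin{equation*}
    \reg(I_w) = \sum_{\ell=0}^k \reg(I_{u_\ell}),
\end{equation*}
where each $I_{u_\ell}$ is (after the index shift built into $u_\ell = 1^{r_\ell} \times v_\ell$) the Schubert determinantal ideal generated by $\G^\ell$ in its own polynomial subring. To conclude, I would verify $\reg(I_{u_0}) = 0$: the dominant part is generated by distinct variables (since $r_w \equiv 0$ on $D_0$), which form a regular sequence of linear forms. Its minimal free resolution is the Koszul complex, whose graded Betti numbers satisfy $\beta_{i,j}(R/I_{u_0}) = 0$ unless $j=i$. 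Thus $\reg(I_{u_0}) = 0$ and the $\ell=0$ term drops out.

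The main obstacle is justifying the disjoint-variables claim. Proposition~\ref{prop:patterns} cleanly controls the \emph{degrees} of the generators under the binomial hypothesis, but the sharper assertion that elusive minors from different components share no variables requires the fine combinatorial structure of the Rothe diagram forced by avoiding $1243$ and $2143$, supplied by Escobar--M\'esz\'aros. Once that structural fact is in hand, the decomposition of the minimal free resolution via the tensor product is a standard application of homological algebra.
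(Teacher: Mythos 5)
Your proposal is correct and follows essentially the same route as the paper: partition the generators by connected component of $D(w)$, invoke the Escobar--M\'esz\'aros characterization to see that distinct components use disjoint variables, apply Proposition~\ref{prop:homalg}, and drop the dominant part because its regularity is $0$. The only addition is your explicit Koszul-complex justification that $\reg(I_{u_0})=0$, which the paper states without proof.
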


    \begin{remark}
        This decomposition can be done for some non-binomial vexillary $I_w$. More precisely, it can be done whenever $w$ avoids the patterns $2143$, $14253$, $15243$, and their inverses, although we leave the proof for future work.
    \end{remark}

     Theorem~\ref{thm:decomp} implies that the Rajchgot--Robichaux--Weigandt and Almousa--Dochtermann--Smith formulas for the regularity of binomial $I_w$ are equal, provided they are equal for $I_{1\times v}$ when $v$ is a dominant permutation. We now present these formulas, starting with Rajchgot--Robichaux--Weigandt.

    \begin{definition}
        The \textit{canonical antidiagonal} of a partition $\lambda$ is the antidiagonal sequence $C_\lambda = \{e_1,...,e_k\}$ of maximum length in $\lambda$ such that $e_i = \{k-i+1, i\}$ for each $i$.
    \end{definition}

    \begin{theorem}[{Special Case of \cite[Theorem 1.5]{SchubReg}}]\label{thm:schubreg}
        Let $v$ be a dominant permutation of shape $\lambda$. Then $\reg(I_{1\times v}) = |C_\lambda|$.
    \end{theorem}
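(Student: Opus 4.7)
The plan is to deduce the formula as a direct specialization of the Rajchgot--Robichaux--Weigandt regularity theorem for vexillary matrix Schubert varieties \cite[Theorem 1.5]{SchubReg}. Since the general result is already available, my proof breaks into two parts: verifying that the hypothesis applies to $u := 1\times v$, and then evaluating the RRW combinatorial quantity at this particular $u$.

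First, I would verify that $u$ is vexillary. Because $v$ is dominant, $D(v)=\lambda$ is a Young diagram and $v$ is $132$-avoiding; since every $2143$ pattern contains a $132$ (for instance the subsequence $2,4,3$ of $2143$), $v$ is already vexillary. Prepending a fixed point cannot introduce a new $2143$ pattern, because $u(1)=1$ is the minimum of $u$, so if it participated in a $2143$ embedding it would be forced to play the role of the value ``$1$'' in the pattern, which sits in the second slot of the pattern---impossible since $u(1)$ necessarily occupies the first slot of any embedding. Thus $u$ is vexillary and the cited theorem applies.

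Next, I would unpack the structure of $u$ to specialize the RRW formula. The diagram $D(u)$ is obtained from $D(v)=\lambda$ by shifting one unit southeast, so $E(u)$ is in bijection with the inside corners of $\lambda$, and the rank function of $u$ is identically $1$ on $E(u)$. Consequently $I_u$ is generated by $2\times 2$ minors of the northwest submatrices indexed by the shifted corners of $\lambda$. I would then read off the combinatorial quantity from RRW's formula in this constrained rank-one setting and identify it with the length of the longest antidiagonal sequence of cells $\{(k-i+1,\,i)\}_{i=1}^{k}$ fitting inside $\lambda$, which is exactly $|C_\lambda|$ by definition of the canonical antidiagonal.

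The main obstacle is the final identification: tracking how RRW's invariants (their flag data, essential-set contributions, and chain lengths) translate through the shift $v\mapsto 1\times v$ and collapse to the canonical antidiagonal of $\lambda$. This is essentially a bookkeeping exercise once the rank-one essential data of $u$ has been laid out, but it requires careful attention to the precise form of the RRW formula and to how the added fixed point affects their combinatorial statistics.
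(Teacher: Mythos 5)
Your approach matches the paper's: Theorem~\ref{thm:schubreg} is stated there purely as a special case of \cite[Theorem 1.5]{SchubReg}, with no written proof, so deducing it by verifying that $1\times v$ is vexillary and then specializing the RRW statistic is exactly what the citation implicitly asks for. Your vexillary verification (dominant $\Rightarrow$ $132$-avoiding $\Rightarrow$ $2143$-avoiding, and prepending a fixed point preserves this) is correct; the remaining identification of the RRW invariant with $|C_\lambda|$, which you defer as bookkeeping, is likewise left implicit by the paper.
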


    Portakal expressed $I_{1\times v}$ ($v$ dominant) as the toric edge ideal of a bipartite graph:

    \begin{definition}
        The \textit{thickening} of a partition $\lambda$ is the partition $\bar{\lambda}$ with $\bar{\lambda}_1 = \lambda_1+1$ and $\bar{\lambda}_i = \lambda_{i-1}+1$ for all $i > 1$.
    \end{definition}

    \begin{definition}
        The \textit{graph} of a partition $\lambda$ with $m$ rows and $n$ columns is the connected bipartite graph $B_\lambda\subseteq K_{m, n}$ with edges $(i, j)$ whenever $(i, j)$ lies in the diagram of $\lambda$.
    \end{definition}

    \begin{theorem}[{\cite[pg. 7]{ToricEdgeIdeals}}]\label{thm:schubgraph}
        Let $v$ be dominant of shape $\lambda$. Then $I_{1\times v}$ is the toric edge ideal of the graph $B_{\bar\lambda}$ of the thickening $\bar{\lambda}$.
    \end{theorem}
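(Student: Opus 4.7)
The plan is to compute the Fulton generators of $I_{1\times v}$ explicitly, match them with the 4-cycle binomials of $B_{\bar\lambda}$, and then invoke the standard description of the toric edge ideal of a bipartite graph coming from a partition. I begin by analyzing $1\times v$: since $(1\times v)(1)=1$ and $(1\times v)(i)=v(i-1)+1$ for $i\geq 2$, the Rothe diagram definition yields $(i,j)\in D(1\times v)$ if and only if $i,j\geq 2$ and $(i-1,j-1)\in D(v)$, so $D(1\times v)$ is $\lambda$ translated by $(1,1)$. The 1 at $(1,1)$ in $M_{1\times v}$ is strictly northwest of every cell of this shifted shape, while the 1 in row $p\geq 2$ sits at column $v(p-1)+1$; when $2\leq p\leq i_k+1$, dominance forces $v(p-1)>\lambda_{p-1}\geq\lambda_{i_k}$, placing this 1 outside the northwest $(i_k+1)\times(\lambda_{i_k}+1)$ rectangle. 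Thus $r_{1\times v}\equiv 1$ on $E(1\times v)=\{(i_k+1,\lambda_{i_k}+1)\}_k$, indexed by the corner rows $i_1<\dots<i_s$ of $\lambda$, and Theorem~\ref{thm:schubref}(1) says $I_{1\times v}$ is generated by the $2\times 2$ minors of $M^{[i_k+1,\lambda_{i_k}+1]}$ as $k$ varies.

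Next I identify the support $S=\bigcup_k [i_k+1]\times[\lambda_{i_k}+1]$ of these generators with the diagram of $\bar\lambda$. For a fixed row $i\in[m+1]$, the largest $j$ with $(i,j)\in S$ is $\max\{\lambda_{i_k}+1:i_k\geq i-1\}$; because the $i_k$ are exactly the rows where $\lambda$ strictly decreases, this maximum equals $\lambda_{i-1}+1=\bar\lambda_i$ (with the convention $\lambda_0:=\lambda_1$). So $x_{ij}\leftrightarrow(r_i,c_j)$ is a bijection between the variables appearing in the generators and the edges of $B_{\bar\lambda}$, and under this bijection the $2\times 2$ minor $x_{i_1j_1}x_{i_2j_2}-x_{i_1j_2}x_{i_2j_1}$ (with $i_1<i_2$, $j_1<j_2$, all four pairs in $S$) is the 4-cycle binomial for the cycle $r_{i_1}c_{j_1}r_{i_2}c_{j_2}$ in $B_{\bar\lambda}$.

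Finally, I invoke the standard fact that the toric edge ideal of the bipartite graph $B_\mu$ attached to a partition $\mu$ is generated by exactly those $2\times 2$ minor binomials of a generic matrix whose four entries all correspond to edges of $B_\mu$, i.e., by the 4-cycle binomials of $B_\mu$. Since $\bar\lambda$ is a partition, this applies to $B_{\bar\lambda}$, and the generating set identified in the previous paragraph therefore coincides with the toric edge ideal of $B_{\bar\lambda}$.

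The main obstacle is the support identification: the thickening $\lambda\mapsto\bar\lambda$, which inserts a duplicate of the first row and adds one to every row length, is precisely engineered so that the row-by-row maxima of $S$ reproduce the parts of $\bar\lambda$. Once this bijection of variables with edges is in hand, matching the $2\times 2$ minors with the 4-cycle binomials is formal, and the remaining step is a citation to the literature on toric edge ideals of bipartite graphs of Ferrers type.
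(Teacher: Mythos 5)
The paper does not prove this statement at all: it is quoted verbatim from Portakal \cite[pg.~7]{ToricEdgeIdeals}, so there is no in-paper argument to compare against. Your proof is a sound direct verification and your computations check out: $D(1\times v)$ is indeed $\lambda$ shifted by $(1,1)$, the only $1$ of $M_{1\times v}$ weakly northwest of any essential cell $(i_k+1,\lambda_{i_k}+1)$ is the one at $(1,1)$ (since dominance forces $v(p-1)>\lambda_{p-1}\geq\lambda_{i_k}$ for $p-1\leq i_k$), so the Fulton generators are the $2\times 2$ minors of the rectangles $[i_k+1]\times[\lambda_{i_k}+1]$, and the row-by-row maxima of the union of these rectangles are exactly the parts $\bar\lambda_i=\lambda_{i-1}+1$ (with $\bar\lambda_1=\lambda_1+1$). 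Two points deserve to be made explicit. First, you need both inclusions between generating sets: you show each Fulton generator is a $4$-cycle binomial of $B_{\bar\lambda}$, but you should also note that every $2\times 2$ minor whose four entries lie in the diagram of $\bar\lambda$ is contained in a \emph{single} rectangle $[i_k+1]\times[\lambda_{i_k}+1]$ (take $i_k$ the smallest corner row $\geq i_2-1$; then $\lambda_{i_k}+1=\bar\lambda_{i_2}\geq j_2$), so that it really is one of Fulton's generators. Second, your final step is itself a citation: that the toric edge ideal of a Ferrers bipartite graph is generated by its $4$-cycle binomials is a genuine theorem (it follows from the Ohsugi--Hibi quadratic-generation criterion for bipartite graphs, or from Corso--Nagel's work on Ferrers ideals), not a formality, so the argument is ``self-contained modulo'' that input --- which is a perfectly reasonable trade, but worth flagging. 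There is also a harmless ring-identification issue ($I_{1\times v}$ lives in $\Bbbk[x_{ij}]_{1\leq i,j\leq n}$ while the edge ideal lives in variables indexed by edges of $B_{\bar\lambda}$); the statement should be read up to this extension of scalars, as you implicitly do.
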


    Using Theorem~\ref{thm:schubgraph}, the following formula of Almousa--Dochtermann--Smith also gives the regularity of $I_{1\times v}$ for $v$ dominant.

    \begin{definition}
        Let $B\subset K_{m, n}$ be a bipartite graph and $S$ a subgraph. The \textit{recession graph} $R(S;B)$ is the directed bipartite graph built from $B$ by directing the edges in $G\setminus S$ from $[m]$ to $[n]$ and making the edges of $S$ bidirectional.
    \end{definition}

    \begin{definition}
        The \textit{recession connectivity} $r(B)$ of a bipartite graph $B$ is the maximum number of components in a subgraph $S\subset B$ such that $R(S;B)$ is strongly connected.
    \end{definition}

    \begin{theorem}[{\cite[Corollary 6.7]{ToricReg}}]\label{thm:toricreg}
        If $B$ is a connected bipartite graph, then the regularity of the toric edge ideal $I_B$ is $r(B)-1$.
    \end{theorem}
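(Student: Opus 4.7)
The plan is to compute the regularity via the polytopal geometry of $I_B$. For the bipartite graph $B \subseteq K_{m,n}$, consider the edge polytope $P_B = \mathrm{conv}\{e_i + f_j : (i,j)\in E(B)\} \subseteq \mathbb{R}^m\oplus \mathbb{R}^n$, so that the toric edge ring $R/I_B$ is the semigroup algebra generated by these lattice points. By a theorem of Ohsugi--Hibi, bipartiteness of $B$ implies $P_B$ is normal, so $R/I_B$ is a normal Cohen--Macaulay domain. For such rings, Hochster's formula for local cohomology (together with Ehrhart--Stanley reciprocity) implies that $\reg(I_B)$ equals the degree of the $h^*$-polynomial of $P_B$, or equivalently the largest $k$ such that the relative interior of $k\cdot P_B$ contains a lattice point.

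Next, one enumerates interior lattice points of dilates combinatorially. A lattice point of $k\cdot P_B$ is an assignment of nonnegative integer weights to $E(B)$ whose induced weight on each vertex equals $k$; an interior lattice point is one that is strictly positive on every facet inequality. For the bipartite edge polytope, the facets are indexed by certain independent sets, and strict positivity translates into an Eulerian-type condition on an associated directed multigraph. The key insight is to package this directed multigraph as the recession graph $R(S;B)$ for a subgraph $S \subseteq B$: the bidirectional edges of $R(S;B)$ correspond to edges of $B$ carrying mass in both ``orientations'' (edges contributing to cancellations in the edge-weight sum), and strong connectivity of $R(S;B)$ is equivalent to the facet-strictness defining the polytopal interior.

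With the dictionary in place, the final step is to match the dilation $k$ with the number of components of $S$. Given $S$ with $R(S;B)$ strongly connected and $c$ components, build an interior lattice point of $(c-1)\cdot P_B$ by summing indicator vectors of directed cycles from a strongly-connected cycle decomposition of $R(S;B)$; the $-1$ accounts for the component that is traversed by every such cycle. Conversely, extract $S$ from an interior lattice point by reading off which edges receive positive net weight after cancellations. Taking the maximum over all valid $S$ yields $\reg(I_B) = r(B)-1$. The main obstacle will be verifying this component-to-dilation correspondence rigorously, particularly the $-1$ shift; I expect this to require either an induction on the cyclomatic number $|E(B)|-|V(B)|+1$, with edge-contraction as the inductive step, or an LP-duality argument on the bipartite edge cone that exactly tracks how strongly-connected components in $R(S;B)$ contribute to the dimension of the canonical module in each graded piece.
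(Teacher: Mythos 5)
This statement is not proved in the paper at all: it is quoted verbatim from Almousa--Dochtermann--Smith \cite[Corollary 6.7]{ToricReg} and used as a black box, so there is no in-paper argument to compare yours against. Judged on its own terms, your sketch points in a reasonable direction (the cited paper does work with polytopal/lattice-point data attached to $B$), but it is not a proof, and it contains one outright error and two unestablished core steps.

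The error: for a normal lattice polytope $P$ with Cohen--Macaulay semigroup ring, the degree of the $h^*$-polynomial is \emph{not} ``the largest $k$ such that the relative interior of $k\cdot P$ contains a lattice point'' --- once some dilate has an interior lattice point, all larger dilates do, so no such largest $k$ exists. The correct relation is $\deg h^*(P) = \dim P + 1 - \mathrm{codeg}(P)$, where $\mathrm{codeg}(P)$ is the \emph{smallest} $k$ with $\mathrm{int}(kP)\cap\mathbb{Z}^N\neq\emptyset$. Any argument built on your version of the formula computes the wrong quantity, and fixing it changes the combinatorics you then need to match against $r(B)$: you would be minimizing a dilation rather than maximizing a component count, and the translation between the two is exactly where the content of the theorem lives. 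Beyond that, the two steps that would constitute the actual proof --- (i) that strict positivity on the facet inequalities of the edge polytope is equivalent to strong connectivity of some recession graph $R(S;B)$, and (ii) that the number of components of $S$ matches the dilation parameter up to the claimed shift of $1$ --- are asserted as a ``dictionary'' and a ``key insight'' without justification; you yourself flag (ii) as the main obstacle. Until the codegree formula is corrected and (i)--(ii) are proved (or the result is simply cited, as this paper does), the argument is a program rather than a proof.
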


    Since the formulas in Theorem~\ref{thm:schubreg} and Theorem~\ref{thm:toricreg} both compute $\reg(I_{1\times v})$, they must be equal. We conclude this paper with a direct proof that the formulas agree.

    \begin{lemma}\label{lemma:upperbound}
        For any partition $\lambda$, $r(B_{\bar{\lambda}}) \leq |C_\lambda|+1$.
    \end{lemma}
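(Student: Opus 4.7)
The plan is to consider any subgraph $S \subseteq B_{\bar\lambda}$ with $c$ connected components such that $R(S; B_{\bar\lambda})$ is strongly connected, and to deduce that the staircase shape $(c-1, c-2, \ldots, 1)$ fits inside $\lambda$; this is equivalent to $c-1 \leq |C_\lambda|$. Before the main argument I will record three setup facts: first, every vertex of $B_{\bar\lambda}$ is incident to some edge of $S$, because any isolated row vertex would be a source in $R(S; B_{\bar\lambda})$ and any isolated column vertex would be a sink; second, by the same source/sink reason each component of $S$ must contain at least one row and at least one column vertex; third, strong connectivity of $R(S; B_{\bar\lambda})$ is equivalent to strong connectivity of the quotient digraph $H$ on the components of $S$, with an arc $P \to P'$ precisely when some row of $P$ is adjacent in $B_{\bar\lambda}$ to some column of $P'$.

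The bulk of the proof will be by contradiction. Assuming the staircase of side $c-1$ does not fit in $\lambda$, let $i$ be the smallest index with $\lambda_i \leq c-i-1$. The case $i=1$ is immediate: $B_{\bar\lambda}$ has only $\bar\lambda_1 = \lambda_1+1 \leq c-1$ column vertices, but the $c$ components each need a column. For $i \geq 2$, the minimality of $i$ forces $\bar\lambda_i = \lambda_{i-1}+1 \geq c-i+2$, whereas the failure assumption gives $\bar\lambda_{i+1} = \lambda_i+1 \leq c-i$. This ``jump'' in the row lengths of $\bar\lambda$ is the key combinatorial input: the rows $R_{i+1}, \ldots, R_{m+1}$ of $B_{\bar\lambda}$ are adjacent only to the first $c-i$ columns $C_1, \ldots, C_{c-i}$, while the rows $R_1, \ldots, R_i$ all reach beyond $C_{c-i}$.

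I will then sort the components of $S$ into \emph{short} blocks (those containing some column from $\{C_1, \ldots, C_{c-i}\}$) and \emph{tall} blocks (the rest). Pigeonhole on the $c-i$ short columns gives at most $c-i$ short blocks, so at least $i$ tall blocks. In any tall block, every row is connected inside the block to some column from $\{C_{c-i+1}, \ldots\}$, which forces every such row into $\{R_1, \ldots, R_i\}$. Since there are only $i$ such rows, equality holds on both counts: exactly $i$ tall blocks, each using one of $R_1, \ldots, R_i$, and so every row in a short block belongs to $\{R_{i+1}, \ldots, R_{m+1}\}$. But such rows reach only short columns, so $H$ contains no arc from a short block to a tall block. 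Since both kinds of blocks exist, this contradicts the strong connectivity of $H$ recorded in the setup.

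The main obstacle is finding this reformulation: the bound $|C_\lambda|+1$ is invisible from the definition of recession connectivity alone, and the proof really hinges on translating strong connectivity into the ``absorbing set'' picture above. Once the short/tall dichotomy is in place, the key reach inequality between $\bar\lambda_i$ and $\bar\lambda_{i+1}$ supplies the needed contradiction, and the concluding pigeonhole essentially writes itself.
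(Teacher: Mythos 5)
Your proof is correct, and it takes a genuinely different route from the paper's. The paper first bounds $r(B_{\bar\lambda})$ by the maximum matching size $m(B_{\bar\lambda})$ and invokes the identity $m(B_\mu)=|C_\mu|$; this disposes of the case $|C_{\bar\lambda}|=|C_\lambda|+1$ at once, and in the remaining case $|C_{\bar\lambda}|=|C_\lambda|+2$ it uses strong connectivity to force an edge of $S$ into the box $[a]\times[b]$ northwest of the gap in $C_{\bar\lambda}$, then deletes that row and column to drop the matching bound by two. You instead prove the structural statement that any $S$ with $c$ components and $R(S;B_{\bar\lambda})$ strongly connected forces the staircase $(c-1,\dots,1)$ into $\lambda$, locating the first index $i$ where the staircase fails and exploiting the jump $\bar\lambda_i\geq c-i+2 > c-i \geq \bar\lambda_{i+1}$ to produce a short/tall bipartition of the components with no arcs from short to tall in the quotient digraph $H$ --- a cut contradicting strong connectivity. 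Your argument is self-contained (it never needs the unproved fact $m(B_\mu)=|C_\mu|$) and makes visible exactly where strong connectivity is used, at the cost of more bookkeeping; the paper's is shorter granted the matching identity. If you write yours up, make two small points explicit: the index $i$ automatically satisfies $i\leq c-1$ (since $\lambda_i\leq c-i-1$ forces $c-i-1\geq 0$), which is what guarantees that short blocks exist alongside the tall ones; and if $i$ exceeds the number of rows of $\lambda$, the set $\{R_{i+1},\dots,R_{m+1}\}$ is empty, in which case the short blocks contain no row vertices at all and the contradiction is immediate from your second setup fact.
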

    \begin{proof}
        Note that $r(B_{\bar{\lambda}})$ is bounded above by $m(B_{\bar{\lambda}})$, since taking one edge from each connected component of a subgraph $S\subset B_{\bar\lambda}$ yields a matching. If the number of rows or columns in $\lambda$ is equal to $|C_\lambda|$, then $|C_{\bar\lambda}| = |C_\lambda|+1$. Since $|C_\mu| = m(B_\mu)$ for any partition $\mu$, $r(B_{\bar\lambda})\leq |C_\lambda|+1$ in this case. Now suppose $\lambda$ has more than $|C_\lambda|$ rows and columns, so $|C_{\bar\lambda}| = |C_\lambda| + 2$. This means there exist $(a+1, b)$ and $(a, b+1)$ in $C_{\bar{\lambda}}$ such that $(a+1, b+1)$ is not in ${\bar\lambda}$. 

        Now let $S\subset B_{\bar\lambda}$ be such that $R(S; B_{\bar\lambda})$ is strongly connected. Note that $S$ must contain an edge $e\in[a]\times[b]$, since otherwise $R(S; B_{\bar\lambda})$ cannot have a path from the column vertex $1$ to the edge vertex $1$. The number of connected components of $S$ is then witnessed by a matching in $B_{\bar\lambda}$ containing $e$. Removing the row and column of ${\bar\lambda}$ intersecting in $e$ yields an auxiliary partition $\nu$. It is clear that $|C_\nu| \geq |C_{\bar\lambda}| - 2$, and the reverse inequality must hold because $C_{\bar{\lambda}}$ passes through $(a+1, b)$ and $\nu$ contains neither $(a+1, b)$ nor $(a, b)$. This implies that the number of components of $S$ is bounded above by $m(B_\nu)+1 = |C_{\bar\lambda}| -2+1 = |C_\lambda| + 1$, completing the proof.
    \end{proof}

    \begin{theorem}\label{thm:formulasagree}
        For any partition $\lambda$, $r(B_{\bar\lambda}) = |C_\lambda|+1$. 
    \end{theorem}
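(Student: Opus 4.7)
The plan is to leverage Lemma~\ref{lemma:upperbound}, which already gives $r(B_{\bar\lambda}) \leq |C_\lambda|+1$; only the reverse inequality remains. To prove $r(B_{\bar\lambda}) \geq |C_\lambda|+1$, I would exhibit a subgraph $S \subseteq B_{\bar\lambda}$ with exactly $|C_\lambda|+1$ connected components such that $R(S; B_{\bar\lambda})$ is strongly connected. Write $k = |C_\lambda|$ and $p = |C_{\bar\lambda}|$; by the case analysis in Lemma~\ref{lemma:upperbound}, $p \in \{k+1, k+2\}$.

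My construction would be built around the canonical antidiagonal $C_{\bar\lambda} = \{e_1, \dots, e_p\}$ with $e_i = (p+1-i, i)$. When $p = k+1$ (i.e., $\lambda$ has exactly $k$ rows or exactly $k$ columns), take $S_0 = C_{\bar\lambda}$, which as a matching has $k+1$ components, and extend to $S$ by adding $(1, c)$ for each column $c > p$ of $\bar\lambda$ and $(r, 1)$ for each row $r > p$. Each of these ``boundary'' edges attaches an isolated vertex to the component of row $1$ or column $1$, leaving the component count equal to $k+1$. When $p = k+2$, the proof of Lemma~\ref{lemma:upperbound} exhibits a critical antidiagonal index $i^*$ with $\bar\lambda_{p+1-i^*} = i^*$; adding the merging edge $(p-i^*, i^*) \in B_{\bar\lambda}$ to $C_{\bar\lambda}$ fuses $e_{i^*}$ and $e_{i^*+1}$ into a single component, bringing the count down to $k+1$ before applying the same boundary extension.

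Strong connectivity of $R(S; B_{\bar\lambda})$ would then follow from three observations about the induced component graph: the component containing row $1$ has an out-edge to every other component (since row $1$ is adjacent to every column of $\bar\lambda$); every component has an out-edge into the component containing column $1$ (since every row is adjacent to column $1$); and along the antidiagonal, the inequalities $\bar\lambda_{p+1-i} \geq i$ supply forward edges from the component of $e_i$ to that of $e_{i+1}$. Together these give a directed cycle through all $k+1$ components.

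The main obstacle I expect is that the simple merge above can fail to produce strong connectivity when $\bar\lambda$ has more than one critical antidiagonal index, as for $\bar\lambda = (5,5,3,2,2)$ coming from $\lambda = (4,2,1,1)$: merging at a single $i^*$ can leave a component whose shortest row fails to reach the next antidiagonal column. The remedy is to \emph{reroute} the antidiagonal at $i^*$ by replacing $e_{i^*}$ with the edge $(p+1-i^*, 1)$, which pulls the row of length $i^*$ down into the component of column $1$, and then cascade upward by successively replacing $e_{i^*+1}, e_{i^*+2}, \dots$ with the vacated columns until the slide terminates at the top, where row $1$ absorbs the leftover column via a boundary edge. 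This cascading slide preserves the count of $k+1$ components while supplying the forward edges needed for strong connectivity, and proving that it always terminates compatibly with $B_{\bar\lambda}$---using that $\bar\lambda_1 = \bar\lambda_2 = \lambda_1+1$ forces the slide to reach the top row before exhausting the antidiagonal---is the technical crux of the argument.
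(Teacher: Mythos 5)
Your reduction to exhibiting a subgraph $S$ with $|C_\lambda|+1$ components and $R(S;B_{\bar\lambda})$ strongly connected is the right move, but the construction you build on $C_{\bar\lambda}$ does not close. Two concrete problems. First, your justification for the forward edges in the component digraph is off by one: to get a directed edge from the component of $e_i=(p+1-i,i)$ to that of $e_{i+1}=(p-i,i+1)$ you need the cell $(p+1-i,i+1)$, i.e.\ $\bar\lambda_{p+1-i}\geq i+1$, not $\bar\lambda_{p+1-i}\geq i$; the failure of this strict inequality is exactly a ``pinch,'' and in the case $p=|C_{\bar\lambda}|=k+2$ at least one pinch always exists. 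Second, and more seriously, your repair for that case --- merge at one critical index, then ``reroute'' and ``cascade'' --- is exactly where all the difficulty of the theorem lives, and you explicitly leave its termination and correctness unproven (your own example $\lambda=(4,2,1,1)$, with consecutive pinches at $i=2,3$, already defeats the single merge). As written, the proposal is a plan with an acknowledged hole at its crux, not a proof.

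The paper avoids this entirely by \emph{not} using $C_{\bar\lambda}$. It takes $S' = C_\lambda\cup\{(1,j),(i,1): i,j>|C_\lambda|\}$ inside $\lambda$, shifts it by $(+1,+1)$ into $\bar\lambda$ (legal because $\bar\lambda_{i+1}=\lambda_i+1$), and adjoins the single edge $(1,1)$ as its own component. The count is immediately $|C_\lambda|+1$, and strong connectivity is routed through the hub component $\{(1,1)\}$: row $1$ of $\bar\lambda$ has full length $\lambda_1+1$, so it has a directed edge to every column and hence to every component, while every row of $\bar\lambda$ meets column $1$, so every component has a directed edge into the hub. No case split on $|C_{\bar\lambda}|$, no pinches, no cascading. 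If you want to salvage your approach, the lesson is to keep row $1$ and column $1$ of $\bar\lambda$ out of the antidiagonal matching so they can serve as this hub; otherwise you must genuinely prove the cascading-slide argument, which you have not done.
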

    \begin{proof}
        We explicitly construct a subgraph $S\subset B_{\bar\lambda}$ with $|C_\lambda| +1$ connected components such that $R(S;B)$ is strongly connected. Let $S' = C_\lambda\cup\{(1, j),(i, 1) : i, j>|C_\lambda|\}$. Now let $S = \{(i+1, j+1)|(i, j)\in S'\}\cup \{(1, 1)\}$, so $S$ consists of $S'$ (regarded as a subset of $\bar\lambda$) along with $(1, 1)$. It is clear that $S$ has $|C_\lambda|+1$ components when viewed as a collection of edges in the graph $B_{\bar\lambda}$. It is also straightforward to verify that $R(S;B_{\bar\lambda})$ is strongly connected. This proves that $r(B_{\bar\lambda})\geq |C_\lambda|+1$. Equality follows by Lemma~\ref{lemma:upperbound}.
    \end{proof}

    \begin{example}
        With $\lambda = (6, 4, 1, 1, 1)$ and $\bar{\lambda} = (7, 7, 5, 2, 2, 2)$, the subgraph $S\subset B_{\bar\lambda}$ constructed in Theorem~\ref{thm:formulasagree} corresponds to the $\bullet$'s in $\bar\lambda$. In this case $|C_\lambda| = 3$ while $\lambda$ has 5 rows and 6 columns, which implies that $|C_{\bar\lambda}| = |C_\lambda| + 2$. In the notation of the proof of Lemma~\ref{lemma:upperbound} we have $[a]\times[b] = [3]\times[2]$. This box is highlighted in yellow on the diagram.

        \begin{center}
        \begin{ytableaushort}
            {\bullet\none\none\none\bullet\bullet\bullet, \none\none\none\bullet, \none\none\bullet, \none\bullet, \bullet, \bullet}
            *{7, 7, 5, 2, 2, 2}
            *[*(yellow)]{2, 2, 2}
        \end{ytableaushort}
        \end{center}
    \end{example}

\section*{Acknowledgements}
We thank Laura Escobar for raising the question that led to Theorem~\ref{thm:binomial} during her visit to UIUC supported by an NSF RTG in Combinatorics (DMS 1937241).
We also thank Nathan Hayes, Tyler Lawson, Gidon Orelowitz  and Alexander Yong for helpful conversations about this material. We were partially supported by a Susan C.~Morosato IGL graduate student scholarship and an NSF RTG in Combinatorics (DMS 1937241). This material is based upon work supported by the National Science Foundation Graduate Research Fellowship Program under Grant No. DGE 21-46756.


\end{document}